\newcommand{\ra}[1]{\renewcommand{\arraystretch}{#1}}
\newtheorem{theorem}{Theorem}[section]
\newtheorem{corollary}{Corollary}[theorem]
\newtheorem{lemma}[theorem]{Lemma}
\newtheorem{definition}[theorem]{Definition}
\definecolor{ballblue}{rgb}{0.13, 0.67, 0.8}
\author{Vasiliki Velona\thanks{Department of Mathematics, Universitat Politècnica de Catalunya, Barcelona, Spain. 
}~\thanks{Department of Economics, Universitat Pompeu Fabra, Barcelona, Spain.}~\thanks{Email: \href{mailto:vasiliki.velona@upf.edu}{vasiliki.velona@upf.edu}}}
\title{Encoding and avoiding 2-connected patterns in polygon dissections and outerplanar graphs}
\begin{document}

\maketitle
\begin{abstract}
Let $\Delta =\{ \delta _1,\delta _2,...,\delta _m \} $ be a finite set of 2-connected patterns, i.e. graphs up to vertex relabelling. We study the generating function $D_{\Delta }(z,u_1,u_2,...,u_m),$ which counts polygon dissections and marks subgraph copies of $\delta _i$ with the variable $u_i$. We prove that this is always algebraic, through an explicit combinatorial decomposition depending on $\Delta $. The decomposition also gives a defining system for $D_{\Delta }(z,\mathbf{0})$, which encodes polygon dissections that avoid these patterns as subgraphs. In this way, we are able to extract normal limit laws for the patterns when they are encoded, and perform asymptotic enumeration of the resulting classes when they are avoided. The results can be transfered to the case of labelled outerplanar graphs. We give examples and compute the relevant constants when the patterns are small cycles or dissections.
\end{abstract}

\section{Introduction}

The study of subgraph appearances in random graph models is a well established line of research, beginning with the classic \emph{Erd\H{o}s-R\'enyi} graph and results concerning the distribution of such appearances and threshold phenomena, as in~\cite{karonski1983number},\cite{rucinski1988small}. In parallel, attention was also drawn on models where, given some well-known graph class, an object is chosen uniformly at random from all the objects of size $n$; see for instance~\cite{kim2007small} and \cite{gao2008distribution} for regular graphs. In the last decades, techniques using a mixture of generating function theory and analytic tools have evolved significantly and are in the centre of such advances for various other graph classes. A number of graph statistics, such as number of components, edges, cut vertices, triangles, chromatic number and others, have been studied for standard graph classes, such as planar graphs, outerplanar, series-parallel, graphs of fixed genus, and minor-closed families; see for instance \cite{chapuy2011asymptotic}, \cite{bodirsky2007enumeration}, \cite{gimenez2013graph}, \cite{mcdiarmid2009random}. 

In~\cite{subcritical}, the authors present a normality result for the so-called \emph{subcritical} family of graphs, that contains standard graph classes such as trees, cacti graphs, outerplanar, and series-parallel graphs. In particular, all subgraph parameters in such a class follow a normal limit law, with linear mean and variance. However, no constructive way is given in it, in order to compute the corresponding constants for the mean and variance. One of the results of this work is an explicit way to do so in outerplanar graphs, for any set of 2-connected patterns, i.e. graphs up to vertex relabelling. As a case study, we examine 3 and 4-cycles, but the process by which these constants are obtained can be directly transferred to the case of any set of 2-connected parameters.

\begin{theorem}\label{222}
The number of appearances $X_n$ of 3-cycles and 4-cycles in polygon dissections and outerplanar graphs of size $n$ follows a normal limit law, as in~\ref{quasi}, where the mean and variance are asymptotically linear, i.e. $\mathbb{E}[X_n]=\mu n+\mathcal{O}(1)$ and $ \mathbb{V}\textsl{ar}\hspace{.03cm}[X_n]=\sigma ^2 n+\mathcal{O}(1)$. The constants $\mu $ and $\sigma ^2$ are the following, in their exact values for dissections and in approximation for outerplanar graphs:
\begin{table}[h!]\centering
\begin{tabular}{l|ll|ll}
Parameter  &  $\mu $ & $\sigma ^2 $ &  $\mu $ & $\sigma ^2 $\\
\midrule[.8pt]

3-cycles   &$\frac{1}{2}$ & ${\frac {-13+9\,\sqrt {2}}{-12+8\,\sqrt {2}}}\approx 0.39644$&  0.34793 &  0.40737  \\ 

4-cycles     & $ {\frac {-30+21\,\sqrt {2}}{-12+8\,\sqrt {2}}}

\approx 0.43933$ & $\,{\frac {-24216+17123\,\sqrt {2}}{-32 \left( -3+2\,\sqrt {2}
 \right) ^{2}}}
\approx 0.44710$ &  0.33705  &  0.36145

\end{tabular}
\end{table}
\end{theorem}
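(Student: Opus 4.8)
The plan is to instantiate the general machinery of the paper in the two special cases $\Delta=\{C_3\}$ and $\Delta=\{C_4\}$, and then to transport the statement from dissections to outerplanar graphs through the block decomposition. By the explicit combinatorial decomposition established earlier (and the algebraicity it entails), the bivariate generating function $D(z,u):=D_{\{\delta\}}(z,u)$, with $\delta=C_3$ (resp.\ $C_4$), is a branch of an explicit algebraic curve $P(z,D,u)=0$ coming from a positive polynomial system, with $z$ marking the number of vertices and $u$ the number of subgraph copies of $\delta$. At $u=1$ this specialises to the classical dissection generating function, whose dominant singularity $\rho_0$ is a simple square-root branch point; by a standard Drmota--Lalley--Woods / perturbed-algebraic-function argument the curve keeps, for $u$ in a complex neighbourhood of $1$, a single dominant singularity $\rho(u)$ of square-root type, analytic and with $\rho'(1)\neq 0$. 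Hence $[z^n]D(z,u)=c(u)\,\rho(u)^{-n}n^{-3/2}(1+o(1))$ uniformly for $u$ near $1$, and Hwang's quasi-powers theorem (in the form of~\ref{quasi}) yields the Gaussian limit law with
\[
\mu=-\frac{\rho'(1)}{\rho(1)},\qquad
\sigma^2=-\frac{\rho''(1)}{\rho(1)}-\frac{\rho'(1)}{\rho(1)}+\left(\frac{\rho'(1)}{\rho(1)}\right)^{\!2}.
\]

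\emph{The dissection constants.} For $\delta=C_3$ I would use that a triangle has no chords, so every $C_3$-subgraph of a dissection is exactly a triangular cell; consequently $P(z,y,u)$ is the $u$-deformation of the quadratic dissection equation in which each triangular cell carries a factor $u$. For $\delta=C_4$ one must instead track that a $4$-cycle on vertices $a<b<c<d$ is present as a subgraph as soon as the four boundary edges $ab,bc,cd,da$ are present, whether or not an internal diagonal subdivides the quadrilateral into two triangular cells; encoding this in the decomposition raises the degree of $P$. In either case I would then solve the branch-point system $P(z,y,u)=\partial_y P(z,y,u)=0$, eliminate $y$ to get the discriminant, take $\rho(u)$ to be the relevant root (an algebraic function of $u$ over $\mathbb{Q}$), and obtain $\rho'(1),\rho''(1)$ by implicit differentiation of the discriminant at $u=1$ (avoiding radicals). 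Substituting into the formulas above gives $\mu=\tfrac12$, $\sigma^2=\tfrac{-13+9\sqrt2}{-12+8\sqrt2}$ for $C_3$ and the stated quadratic irrationalities for $C_4$.

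\emph{Transfer to outerplanar graphs.} A $2$-connected outerplanar graph has a unique Hamilton cycle (its outer face), so it is the superposition of a labelled cycle and a non-crossing chord set, i.e.\ of a labelled $n$-cycle and a dissection; this gives an explicit relation between the exponential generating function $B(z,u)$ of $2$-connected outerplanar graphs with $\delta$-copies marked and $D(z,u)$. Since $C_3$ and $C_4$ are $2$-connected, every copy of $\delta$ in a connected outerplanar graph lies inside a single block, so the marking is compatible with the usual block and $\exp$--$\log$ decompositions, $C^{\bullet}(z,u)=z\exp\!\big(\partial_y B(C^{\bullet}(z,u),u)\big)$ and $\mathrm{OP}(z,u)=\exp(C(z,u))$. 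Because outerplanar graphs form a subcritical class (in the sense of~\cite{subcritical}), the block equation falls in the smooth implicit-function schema, so $\mathrm{OP}(z,u)$ has a square-root dominant singularity $\rho(u)$, analytic and non-constant at $u=1$ for $u$ near $1$, and the same quasi-powers argument applies; here $\rho(u)$ is cut out by a transcendental equation, so $\rho(1),\rho'(1),\rho''(1)$ are evaluated numerically, yielding the approximate constants in the last two columns.

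\emph{Main obstacle.} The principal difficulty is the enumerative bookkeeping for $C_4$: describing, without over- or under-counting, the cell configurations of a dissection whose boundary edges realise a $4$-cycle, and encoding them in $P$ --- for $C_3$ this is essentially immediate. On the analytic side one must verify the non-degeneracy (variability) condition $\sigma^2>0$ and the uniformity of the singular expansion in $u$, i.e.\ that the perturbed curve (resp.\ the perturbed block equation) keeps a single simple square-root dominant singularity with $\rho(u)$ simple and no confluence of branch points; this is guaranteed by positivity and subcriticality but needs a short justification. The remaining work --- elimination, two implicit differentiations, simplification of the resulting quadratic irrationalities, and the numerics for the outerplanar case --- is routine.
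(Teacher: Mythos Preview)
Your proposal is correct and follows essentially the same route as the paper: derive the algebraic equation for $D(z,u)$ from the combinatorial decomposition, locate $\rho(u)$ via the discriminant and differentiate it implicitly at $u=1$ to feed into Theorem~\ref{quasi}, then lift to outerplanar graphs through $B'(z,u)=\tfrac{1}{2z}D(z,u)+\tfrac{z}{2}$, the block equation $zC'=z\exp(B'(zC',u))$, and subcriticality. One small inaccuracy worth fixing: it is $C'$ (equivalently $C^{\bullet}$) that carries the square-root singularity; after integrating and exponentiating, $C$ and $G=\exp(C)$ have critical exponent $3/2$, not $1/2$---but since Theorem~\ref{quasi} applies for any $\alpha\in\mathbb{R}\setminus\mathbb{N}$ and the constants $\mu,\sigma^2$ depend only on $\rho(u)$, your argument goes through unchanged.
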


A necessary step for the analysis of outerplanar graphs is the analysis of polygon dissections, denoted by $\mathcal{D}$, with some fixed numbering on the vertices. For a finite set of 2-connected patterns $\Delta =\{ \delta _1,\delta _2,...,\delta _k \} $, we prove a combinatorial decomposition of $\mathcal{D}$ that allows the encoding of such patterns, depending on $\Delta $. In this way, we obtain defining systems for the multivariate generating function $D_{\Delta }(z,u_1,u_2,...,u_k)$, where the coefficient of $z^nu_1^{n_1}\cdots u_m^{n_m}$ counts the number of $\alpha \in\mathcal{D}$ that have $n$ vertices and $n_i$ subgraph occurrences of the pattern $\delta _i$. 

This task is of independent interest, as it is related to the enumeration problem of polygon dissections, a line of work that is quite old. Starting from the enumeration of polygon triangulations with Euler and Segner in the 18th century, a great amount of work has been devoted up until today to relevant problems. Usually, these problems put restrictions either on the number or the size of the partition's polygonal components, or even colour restrictions, recently; see for instance \cite{cayley}, \cite{read1978general}, \cite{birmajer2017colored}. However, the problem where a whole pattern is avoided as subgraph (i.e., cannot be recovered by applying edge and vertex deletions) seems to not have been studied at all, except for the case of triangle freeness in~\cite{birmajer2017colored}, where the problem the authors are dealing with does not concern subgraph restrictions, but restrictions on the type and colour of the partition's polygonal components. With results of this work, it is possible to handle subgraph restrictions of any set $\Delta $ and perform asymptotic enumeration of the resulting classes. We give such examples. In fact, we obtain the following results (corresponding to Corollary~\ref{cor} and Theorem~\ref{enumm}, respectively):

\begin{theorem}\label{alg}
The generating function $D_{\Delta }(z,\mathbf{u})$ is algebraic and the defining polynomial is computable. The generating function of polygon dissections that avoid all $\Delta $-patterns as subgraphs, $D_{\Delta }(z,\mathbf{0})$, is likewise algebraic.
\end{theorem}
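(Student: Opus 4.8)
The plan is to refine the classical root-edge decomposition of polygon dissections so that, in addition to the size, it also records a bounded amount of local boundary data, enough to detect pattern occurrences across gluings. Fix $\Delta=\{\delta_1,\dots,\delta_k\}$ and set $t=\max_i|V(\delta_i)|$. Recall that a dissection rooted at a boundary edge $e$ is either a single edge, or else the face incident to $e$ is a $j$-gon cell $C$ (with $j\ge 3$) and the $j-1$ remaining edges of $C$ are the root edges of smaller dissections $\alpha_1,\dots,\alpha_{j-1}$, which are glued to $C$ along edges that pairwise meet in at most one vertex. The point where 2-connectivity of the patterns enters is the following confinement statement: if $H$ is a subgraph copy of some $\delta_i$ that is not contained in a single $\alpha_\ell$, then $H$ uses some edges of $C$ and reaches into several of the $\alpha_\ell$; but since $\alpha_\ell$ is separated from the rest of the dissection by the two endpoints $x,y$ of its gluing edge and $H$ is 2-connected, $H\cap\alpha_\ell$ is a connected subgraph joining $x$ and $y$ and has at most $t$ vertices. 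Consequently, whether such a crossing copy can be completed is determined by a bounded \emph{profile} of each $\alpha_\ell$, namely the list of isomorphism types of connected subgraphs on at most $t$ vertices that join the two endpoints of its root edge, together with the number of copies of each such type. There are only finitely many admissible profiles, since each is bookkeeping about graphs on at most $t$ vertices.

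I would then introduce one generating-function unknown $D_\pi(z,\mathbf u)$ for each admissible profile $\pi$, with $z$ marking vertices and $u_i$ marking copies of $\delta_i$, so that $D_\Delta(z,\mathbf u)$ is the series $D_{\emptyset}(z,\mathbf u)$ attached to the empty profile (no external constraint). The root-edge decomposition, read in this enriched setting, yields for every $\pi$ an equation of the shape
\[
D_\pi(z,\mathbf u)\;=\;\sum_{j\ge 3}\ \ \sum_{\substack{(\pi_1,\dots,\pi_{j-1})\\ \text{compatible with }\pi}}\; z^{\,a}\,\mathbf u^{\,\mathbf b}\ \prod_{\ell=1}^{j-1}D_{\pi_\ell}(z,\mathbf u)\ \ +\ \ (\text{base terms}),
\]
where $a=a(j,\pi_1,\dots,\pi_{j-1})$ is the number of fresh vertices contributed by the cell $C$, and $\mathbf b=\mathbf b(j,\pi_1,\dots,\pi_{j-1})$ records the copies of the patterns created inside $C$ together with its attached pieces (including, for finitely many $j$, the cell $C$ itself when some $\delta_i$ is a $j$-gon). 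Because a cell larger than $t$ carries no pattern on its own and a crossing copy can only involve a bounded number of consecutive glued pieces, the dependence of the compatibility relation and of $a,\mathbf b$ on $j$ stabilizes for large $j$; hence $\sum_{j>2t}$ contributes a rational function of $z$, $\mathbf u$ and the unknowns by a transfer-matrix/geometric-series argument, and each right-hand side is a rational function of $z$, $\mathbf u$ and the $D_{\pi'}$. Clearing denominators produces a finite polynomial system with a unique formal power-series solution; since $D_\Delta=D_{\emptyset}$ is one of its coordinates, $D_\Delta(z,\mathbf u)$ is algebraic, and eliminating the auxiliary unknowns $D_{\pi'}$ by resultants (or a Gröbner-basis computation) yields the explicit defining polynomial. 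For the second statement, substituting $\mathbf u=\mathbf 0$ keeps in each equation only the tuples $(\pi_1,\dots,\pi_{j-1})$ for which no pattern copy appears, leaving a polynomial system of the same type; equivalently, $D_\Delta(z,\mathbf 0)$ is algebraic because algebraic power series are closed under setting variables to $0$.

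The main obstacle is precisely the design of the profiles and the proof that they are \emph{self-reproducing} under the decomposition: one must show that the profile of the whole dissection (the list of short rooted subconfigurations through the new root edge, with multiplicities) is a computable function of $j$ and of the profiles of $\alpha_1,\dots,\alpha_{j-1}$, and likewise that the number $\mathbf b$ of newly created pattern copies is so determined. This rests on the confinement observation above, but making it quantitatively precise --- controlling how a copy of $\delta_i$ can distribute itself among $C$ and the $\alpha_\ell$, avoiding double counting of copies, and bounding the number of profiles --- is where the real work lies; the title's ``explicit combinatorial decomposition depending on $\Delta$'' refers to exactly this construction. A secondary, routine point is the termination of the elimination step, which holds because the system is finite and its solution is a well-defined power series, so the generated ideal is zero-dimensional over $\mathbb{Q}(z,\mathbf u)$.
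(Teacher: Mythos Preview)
Your overall plan---refine the root-edge decomposition so that each attached piece carries a bounded invariant sufficient to count pattern copies crossing the gluing---is the right idea, and your confinement observation (that $H\cap\alpha_\ell$ is a connected $x$--$y$ subgraph on at most $t$ vertices) is correct and is indeed where $2$-connectivity enters. But there is a genuine gap at the sentence ``there are only finitely many admissible profiles, since each is bookkeeping about graphs on at most $t$ vertices.'' Your profile records, for each isomorphism type of short subgraph through the root endpoints, the \emph{number} of copies of that type inside $\alpha_\ell$; the types are finitely many, but the multiplicities are a priori unbounded natural numbers, so as written the profile ranges over an infinite set and your system has infinitely many unknowns $D_\pi$. (For instance, to count new $C_4$-copies you need the number of length-$3$ $x$--$y$ paths in each piece; that this number is bounded in an outerplanar graph is a nontrivial structural fact, not a consequence of ``graphs on at most $4$ vertices.'') You yourself flag the self-reproduction of profiles as ``where the real work lies''; the finiteness is part of that same work, and without it algebraicity does not follow.

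The paper fills exactly this gap, but with a different and more concrete invariant than your profiles. Rather than counting short subgraphs, it attaches to each dissection its \emph{composite root}: the maximal sub-dissection at the root edge built from faces of size at most $H_\Delta=\max_i|V(\delta_i)|$, subject to a depth condition bounding its total size by $H_\Delta$. Composite roots are finite in number by definition, and the classes $\bar{\mathcal D}_{\nu_j}$ (one per composite root $\nu_j$) together with a class $\bar{\mathcal D}_\circ$ (root face large, or a single edge) partition $\bar{\mathcal D}$. The point that replaces your multiplicity bookkeeping is that a copy of $\delta_i$ not confined to a single attached piece must lie entirely inside the composite root together with the composite roots of the pieces: it cannot reach into a piece from $\bar{\mathcal D}_\circ$, because any $x$--$y$ path there avoiding the root edge must go around a face of size $>H_\Delta$ and hence is too long. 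Thus the number of new copies is a fixed integer $p^{ji}_{\mathbf c}$ depending only on the tuple of classes of the attached pieces, no subgraph counts are carried, and one reads off a finite, explicitly computable, strongly connected polynomial system. Your profile approach could in principle be completed, but only after proving a boundedness lemma that the composite-root construction builds in from the start.
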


\begin{theorem}\label{ex}
Let $\mathcal{D}, \mathcal{G}$ be the classes of dissections and outerplanar graphs avoiding a set of 2-connected patterns $\Delta =\{\delta _1 ,...,\delta _m\}$, respectively. Then, $\mathcal{D}$ and $\mathcal{G}$ have asymptotic growth of the form: \[\alpha _n\sim \frac{\alpha}{\Gamma (-\frac{1}{2})}\cdot n^{-3/2}\cdot r ^{-n} \quad \mathrm{and}\quad g_n\sim \frac{g}{\Gamma (-\frac{3}{2})}\cdot n^{-5/2} \cdot \rho ^{-n}\cdot n!,\] respectively, where both $\alpha ,g$ are computable constants. In Table~\ref{table:3}, there are approximations of $\alpha , g$ for various choices of $\Delta $. \end{theorem}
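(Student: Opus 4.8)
The plan is to obtain both asymptotics by singularity analysis of algebraic generating functions, starting from Theorem~\ref{alg} and then using the standard block decomposition to pass from polygon dissections to outerplanar graphs. For the dissection estimate, recall that by Theorem~\ref{alg} the series $D_{\Delta }(z,\mathbf 0)$ is a coordinate of the polynomial system with non-negative coefficients produced by the combinatorial decomposition. I would check that this system is proper, irreducible and aperiodic — or, if it is not strongly connected, reduce to its strongly connected components — and apply the Drmota--Lalley--Woods analysis: $D_{\Delta }(z,\mathbf 0)$ then has a unique dominant singularity $r$ of square-root type, $D_{\Delta }(z,\mathbf 0)=D_{\Delta }(r,\mathbf 0)+\alpha \sqrt{1-z/r}+o\!\left(\sqrt{1-z/r}\right)$ with $\alpha \neq 0$. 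Aperiodicity and non-degeneracy are automatic here: since $\Delta $ is finite, each $\delta _i$ is $2$-connected, and the only $2$-connected subgraph of a cycle $C_n$ is $C_n$ itself, the empty dissection of the $n$-gon avoids $\Delta $ for all but finitely many $n$, so the class is infinite and $d_{\Delta ,n}>0$ eventually. The transfer theorem then gives $\alpha _n\sim \frac{\alpha }{\Gamma (-\frac12)}\,n^{-3/2}r^{-n}$, where $r$ is a branch point of the defining polynomial (computable from its discriminant) and $\alpha $ is computable by the Newton--Puiseux algorithm.

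To reach outerplanar graphs I would use that a copy of a $2$-connected graph inside a graph $H$ is contained in a single block of $H$; hence a labelled outerplanar graph avoids $\Delta $ as a subgraph if and only if each of its blocks does. Writing $B_{\Delta }, C_{\Delta }, G_{\Delta }$ for the exponential generating functions of, respectively, $2$-connected, connected, and arbitrary outerplanar graphs avoiding $\Delta $, and $C^{\bullet }_{\Delta }(x)=xC_{\Delta }'(x)$ for the vertex-pointed connected class, the usual block decomposition gives $C^{\bullet }_{\Delta }(x)=x\exp\!\bigl(B_{\Delta }'(C^{\bullet }_{\Delta }(x))\bigr)$ and $G_{\Delta }(x)=\exp\!\bigl(C_{\Delta }(x)\bigr)$. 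A $2$-connected outerplanar graph on $[n]$ is exactly a Hamiltonian cycle on $[n]$ together with a compatible non-crossing chord set, and there are $\tfrac{(n-1)!}{2}$ such cycles and, independently of the choice, $[z^n]D_{\Delta }(z,\mathbf 0)$ chord sets avoiding $\Delta $; hence $B_{\Delta }(x)=\tfrac{x^2}{2}+\tfrac12\int_0^x D_{\Delta }(t,\mathbf 0)\,\tfrac{dt}{t}$, so $B_{\Delta }$ inherits from the first step a singularity at $r$ of exponent $3/2$ (integration raises the exponent by one).

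It remains to identify the singular behaviour of $G_{\Delta }$. The schema $y=x\phi(y)$ with $\phi=\exp(B_{\Delta }')$ is of smooth implicit-function type and is automatically subcritical: because $B_{\Delta }$ has exponent $3/2$ at $r$, $B_{\Delta }''$ blows up there, so $\tau \phi'(\tau )/\phi(\tau )=\tau B_{\Delta }''(\tau )\to \infty $ as $\tau \to r^-$, and the characteristic equation $\tau B_{\Delta }''(\tau )=1$ has a root $\tau _0\in(0,r)$; by the Meir--Moon (smooth implicit-function) schema, $C^{\bullet }_{\Delta }$ has a square-root singularity at $\rho =\tau _0e^{-B_{\Delta }'(\tau _0)}<r$. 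Integrating, $C_{\Delta }$ acquires exponent $3/2$ at $\rho $, and since $C_{\Delta }(\rho )<\infty $ so does $G_{\Delta }=\exp(C_{\Delta })$; the transfer theorem yields $g_n\sim \frac{g}{\Gamma (-\frac32)}\,n^{-5/2}\rho ^{-n}n!$, with $g$ the coefficient of $(1-x/\rho )^{3/2}$ in the Puiseux expansion of $G_{\Delta }$. All constants are computable: $\tau _0$ and $\rho $ solve the characteristic equation, whose ingredients satisfy polynomial equations over $\mathbb Q$ because $D_{\Delta }(z,\mathbf 0)$ is algebraic, and $g$ follows by propagating the Puiseux expansion through the integration and the exponential; this is exactly the computation recorded in Table~\ref{table:3}. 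The one genuinely delicate point is the first step — certifying a square-root singularity for $D_{\Delta }(z,\mathbf 0)$ in the full generality of the decomposition of Theorem~\ref{alg}, i.e.\ the irreducibility and aperiodicity of the system (or the absence of a supercritical cascade among its strongly connected components); everything downstream is routine analytic combinatorics.
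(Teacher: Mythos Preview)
Your proposal is correct and follows the same route as the paper: Drmota's system theorem (Theorem~\ref{drmota}) applied to the decomposition of Theorem~\ref{main} yields the square-root singularity of $D_\Delta(z,\mathbf 0)$, and the block decomposition $C^\bullet=z\exp(B'(C^\bullet))$, $G=\exp(C)$ together with subcriticality lifts this to critical exponent $3/2$ for $G_\Delta$. You are in fact more explicit than the paper on the two points you flag as delicate --- aperiodicity of the restricted system at $\mathbf u=\mathbf 0$ and the subcriticality check via $\tau B_\Delta''(\tau)\to\infty$ as $\tau\to r^-$ --- which the paper handles only by referring back to the $\mathbf u=\mathbf 1$ analysis of Theorem~\ref{random} and to \cite{subcritical}, respectively.
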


We also prove a multivariate central limit theorem for the number of appearances of 2-connected patterns in polygon dissections (corresponding to Theorem~\ref{random}): 
\begin{theorem}\label{1st}
Let $\Delta =\{\delta _1,...,\delta _m\}$ be a set of 2-connected patterns. Let $\Omega _n$ be the set of polygon dissections of size $n$ and $X_n:\Omega _n \rightarrow \mathbb{Z}_{\geq 0}^m$ be a vector of random variables $X_{\delta _1},...,X_{\delta _m}$ in $\Omega _n$, such that $X_{\delta _i}(\omega )$ is the number of $\delta _i$ patterns in $\omega \in\Omega _n$. Then, $\mathbf{X}_n$ satisfies a central limit theorem \[\frac{1}{\sqrt{n}}(\mathbf{X}_n-\mathbb{E}[\mathbf{X}_n])\xrightarrow[]{d} N(\mathbf{0},\mathbf{\Sigma })\] with 
\[\mathbb{E}[\mathbf{X}_n]=\boldsymbol{\mu }n+\mathcal{O}(1) \text{ and } \mathbb{C}\textsl{ov}\hspace{.03cm}[\mathbf{X}_n]=\mathbf{\Sigma}n+\mathcal{O}(1),\] where $\boldsymbol{\mu }$ and $\mathbf{\Sigma}$ are computable. 
\end{theorem}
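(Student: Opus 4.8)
The plan is to feed the algebraicity and explicit defining system of Theorem~\ref{alg} into the machinery of multivariate singularity analysis. By Theorem~\ref{alg}, $D_\Delta(z,\mathbf{u})$ is a coordinate of the solution of a polynomial system produced by the combinatorial decomposition; since that decomposition is assembled from positive constructions (rooted substitution of dissections into finitely many $\Delta$-decorated skeletons), this system can be cast as $\mathbf{y}=\mathbf{\Phi}(z,\mathbf{u},\mathbf{y})$ with $\mathbf{\Phi}$ having non-negative power-series coefficients and depending polynomially on $\mathbf{u}$. First I would check, at $\mathbf{u}=\mathbf{1}$ and after the standard reduction, that the system is irreducible, aperiodic and non-linear, so the Drmota--Lalley--Woods theorem applies: $D_\Delta(z,\mathbf{1})$ then has a unique dominant singularity $\rho=\rho(\mathbf{1})$, a square-root branch point,
\[
D_\Delta(z,\mathbf{1}) = g(z)-h(z)\sqrt{1-z/\rho},\qquad h(\rho)\neq 0,
\]
with $g,h$ analytic near $\rho$; this recovers the $n^{-3/2}\rho^{-n}$ count of dissections, consistent with Theorem~\ref{ex}.

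Next I would show that this picture is stable under a small complex perturbation of $\mathbf{u}$ around $\mathbf{1}$. The system depends analytically (indeed polynomially) on $\mathbf{u}$, so the analytic implicit function theorem applied to the system together with its Jacobian-vanishing equation makes $\rho(\mathbf{u})$ analytic on a neighbourhood $U$ of $\mathbf{1}$; shrinking $U$, the local singular expansion persists uniformly,
\[
D_\Delta(z,\mathbf{u}) = g(z,\mathbf{u})-h(z,\mathbf{u})\sqrt{1-z/\rho(\mathbf{u})},
\]
with $g,h$ analytic on a fixed bidisc, $h(\rho(\mathbf{u}),\mathbf{u})\neq 0$, and no competing singularity of modulus $\le|\rho(\mathbf{u})|$. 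Uniform transfer theorems then give $[z^n]D_\Delta(z,\mathbf{u}) = C(\mathbf{u})\,\rho(\mathbf{u})^{-n}\,n^{-3/2}(1+O(1/n))$ with $C$ analytic and non-vanishing on $U$ and the error uniform for $\mathbf{u}\in U$.

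Finally, the probability generating function of $\mathbf{X}_n$ is the ratio
\[
\mathbb{E}\bigl[\mathbf{u}^{\mathbf{X}_n}\bigr]=\frac{[z^n]D_\Delta(z,\mathbf{u})}{[z^n]D_\Delta(z,\mathbf{1})}
=\frac{C(\mathbf{u})}{C(\mathbf{1})}\Bigl(\tfrac{\rho(\mathbf{1})}{\rho(\mathbf{u})}\Bigr)^{n}\bigl(1+O(1/n)\bigr),
\]
uniformly on $U$, which is exactly the shape required by the multivariate quasi-powers theorem. Writing $u_i=e^{s_i}$, the cumulant generating function of $\mathbf{X}_n$ equals $n\bigl(\log\rho(\mathbf{1})-\log\rho(e^{\mathbf{s}})\bigr)+\log\tfrac{C(e^{\mathbf s})}{C(\mathbf 1)}+O(1/n)$, and by Cauchy's estimates the error and its low-order derivatives are $O(1/n)$ uniformly; a Taylor expansion at $\mathbf{s}=\mathbf{0}$ yields $\frac{1}{\sqrt n}(\mathbf{X}_n-\mathbb{E}[\mathbf{X}_n])\xrightarrow{d} N(\mathbf{0},\mathbf{\Sigma})$ with $\boldsymbol{\mu}=-\nabla_{\mathbf{s}}\log\rho(e^{\mathbf{s}})\big|_{\mathbf{s}=\mathbf{0}}$ and $\mathbf{\Sigma}=-\mathrm{Hess}_{\mathbf{s}}\log\rho(e^{\mathbf{s}})\big|_{\mathbf{s}=\mathbf{0}}$ (equivalently, apply the univariate Hwang quasi-powers theorem to every linear form $\langle\boldsymbol{\lambda},\mathbf{X}_n\rangle$ and invoke the Cram\'er--Wold device), together with $\mathbb{E}[\mathbf{X}_n]=\boldsymbol{\mu}n+O(1)$ and $\mathrm{Cov}[\mathbf{X}_n]=\mathbf{\Sigma}n+O(1)$; analyticity of $\rho$ at $\mathbf{1}$ makes $\boldsymbol{\mu},\mathbf{\Sigma}$ finite and explicitly computable by implicit differentiation of the defining system.

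The delicate point is the perturbation step: for an arbitrary finite $\Delta$ one must guarantee that the defining system stays irreducible and aperiodic, and that the square-root branch point neither degenerates nor gets overtaken by another singularity as $\mathbf{u}$ leaves $\mathbf{1}$ — in particular that $h(\rho(\mathbf{u}),\mathbf{u})$ stays nonzero. If the dependency graph of the system fails to be strongly connected for some $\Delta$, one argues stratum by stratum, showing that the leading stratum carries the dominant square-root singularity while the others contribute only analytically or with strictly larger growth rate. This bookkeeping, together with checking the non-degeneracy conditions identifying $\mathbf{\Sigma}$ as the genuine covariance matrix, is where the real work lies; the remaining ingredients are either furnished by Theorem~\ref{alg} or are routine applications of the transfer theorems and the quasi-powers theorem.
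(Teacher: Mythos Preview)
Your approach is the paper's approach: cast the decomposition as a positive, non-linear, strongly connected system, invoke Theorem~\ref{drmota} (Drmota--Lalley--Woods) to get the uniform square-root expansion in a neighbourhood of $\mathbf u=\mathbf 1$, then read off the CLT from Theorem~\ref{quasi} (quasi-powers). Two small points where you diverge from the paper's actual argument are worth flagging.

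First, your ``delicate point'' about irreducibility possibly failing for some $\Delta$ is moot: Theorem~\ref{main} proves, for every finite $\Delta$, that the system is strongly connected (each $\bar D_{\nu_j}$ depends on $\bar D_\circ$, $\bar D_\circ$ depends on $\bar D$, and $\bar D$ is the sum of all of them). So no stratum-by-stratum fallback is ever needed; you can simply cite that part of Theorem~\ref{main} and move on.

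Second, the paper supplies one verification you elide: that the characteristic system~\eqref{charsystem} has a finite positive solution at $\mathbf u=\mathbf 1$. The paper does this with a short Perron--Frobenius argument---the Jacobian $\mathbf r_{\mathbf y}$ is non-negative irreducible, its dominant eigenvalue is strictly increasing along the solution curve and stays below~$1$ for $z<\rho$, while $\bar D(\rho,\mathbf 1)$ is finite by algebraicity, so equality is attained exactly at $z=\rho$. Your sentence ``the analytic implicit function theorem applied to the system together with its Jacobian-vanishing equation makes $\rho(\mathbf u)$ analytic'' presupposes this solution exists; it is a one-line check, but it is the one place where something beyond ``the system is positive and non-linear'' is actually used.
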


There are some natural questions arising from this work. One is whether it is possible to extend the combinatorial construction that is proved for general parameters, with multiple cut vertices, and how. Also, one might wonder in which other combinatorial structures we can apply this reasoning, apart from outerplanar graphs. An example for the latter can be found in the dual class of polygon dissections, planted plane trees with outdegrees in $\mathbb{N}\setminus \{1\}$, denoted by $\mathcal{T}$. Consider as parameter in $T\in\mathcal{T}$ the number of subtrees $T'$ with $k$ leaves, such that $\deg _T(v)=\deg _{T'}(v)$ for each node $v$ that is inner in $T'$. Then, the equivalent parameter for polygon dissections is the number of $k$-cycles.
\newline\newline
\textbf{Plan of the paper.} In Section 2, we mention definitions and theorems that will be used. In Section 3, we prove a combinatorial decomposition of $\mathcal{D}$ depending on $\Delta $ and then Theorem~\ref{alg}. We also prove Theorem~\ref{1st}. In section 4, we give applications of the previous and prove Theorem~\ref{222} and Theorem~\ref{ex}. In the Appendix, Table~\ref{count} contains the initial terms of all the counting sequences appearing in Section 4.


\section{Preliminaries}

The framework we use is the \emph{symbolic method} and the corresponding analytic techniques, as they were presented in \cite{flajolet1999analytic}. 
\paragraph{Symbolic methods for counting.}

A \emph{combinatorial class} is a set $\mathcal{A}$ with a \emph{size function} $\mathcal{A}\rightarrow \mathbb{Z}_{\geq 0}$, such that the inverse image of any integer is a finite set, denoted $\mathcal{A}_n$. Each $\alpha\in\mathcal{A}_n$ comprises $n$ \emph{atoms} of size 1, and we denote by $\mathcal{Z}$ the \emph{atomic class} that contains exactly one object of size one. In this work, atoms always represent graph vertices. We call a class $\mathcal{A}$ \emph{labelled} if the atoms have labels and $\mathcal{A}$ is closed under atom relabelling. The \emph{ordinary generating function} $A(z)$ of $\mathcal{A}$, referred also as \emph{ogf}, is defined as $\sum _{n=0}^{\infty }|\mathcal{A}_n|z^n$. If $\mathcal{A}$ is labelled, we use the \emph{ exponential generating function} $A(z)$, referred also as \emph{egf}, that is defined as $\sum _{n=0}^{\infty }|\mathcal{A}_n|\frac{z^n}{n!}$. We then write $[z^n]A(z)=|\mathcal{A}_n|$ for ogfs and $[z^n]A(z)=\frac{|\mathcal{A}_n|}{n!}$ for egfs.\footnote{From now on, generating functions will be denoted by plane letters and combinatorial classes by calligraphic letters.}

In order to create functional equations for the generating functions of interest, we use the so-called \emph{admissible} combinatorial constructions from \cite{flajolet1999analytic}. The aim is to express a combinatorial class in terms of other ones, itself included, in an \textit{admissible way}. Then, there is a direct translation in terms of generating functions. From unlabelled classes and ogfs, we only need the elementary cases $\mathcal{A}=\mathcal{B}\cup\mathcal{C}\Rightarrow A(z)=B(z)+C(z)$ and $\mathcal{A}=\mathcal{B}\times\mathcal{C}\Rightarrow A(z)=B(z)\cdot C(z)$. In Table~\ref{table:1}, there are all the labelled constructions that are useful to this work, along with their translations to egfs. 

It is useful to consider \textit{parameters} on the objects of $\mathcal{A},$ i.e functions $\chi _i:\mathcal{A}\rightarrow \mathbb{Z}_{\geq 0}$ that quantify some structure of the objects. Let $\mathbf{j} $ be $(j_1,...,j _{m})$\footnote{This convention is followed  in an analogous way for all bold characters.}, where $j_i \in \mathbb{Z}_{\geq 0}$ and let us define $\mathcal{A} _{n,\mathbf{j}}$ as the set of elements $\alpha\in\mathcal{A}$ that have size $n$ and $\chi _{i}(\alpha )=j _{i}.$ Then we work with multivariate generating functions, ordinary $\sum _{n,j _i\geq 0}|\mathcal{A}_{n,\mathbf{j} }|z^{n}u_1^{j _1}u_1^{j _2}...u_m^{j  _{m}}$ for unlabelled classes and exponential $\sum _{n,j _i\geq 0}|\mathcal{A} _{n,\mathbf{j} }|\frac{z^{n}}{n!}u_1^{j _1}u_2^{j _2}...u_m^{j  _{m}}$ for labelled ones. All the mentioned translations are also valid for multivariate generating functions, if the parameters are \emph{compatible}, i.e. $\chi (\alpha ')$ is the same for all order-preserving relabelings $\alpha '$ of $\alpha \in \mathcal{A}$, and  \emph{additive}, i.e. $\chi (\alpha )=\sum _i \chi (\beta _i)$ when $\alpha \in \mathcal{A}$ is composed of smaller elements $\beta _i\in \mathcal{B}$.

A generating function $y(z,\mathbf{u})$ is called \emph{algebraic} if it satisfies a polynomial equation $P(z,y,\mathbf{u})=0$.

\begin{table}[h]\centering
\begin{tabular}{@{ }cc|c||c@{ }}

\textit{Labelled product:} & $\mathcal{B}\star \mathcal{C}$ & $B(z)\cdot C(z)$ &   \\

\textit{Set:} & $\mathrm{S\scalebox{.9}{ET}}(\mathcal{B})$ & $\exp (B(z))$ & \\
 \textit{Substitution:} & $\mathcal{B}\circ \mathcal{C}$ & $ B(C(z)) $ \\
\end{tabular}
\begin{tabular}{@{ }cc|c@{ }}

\textit{Pointing:} & $\mathcal{B}^{\bullet}$ & $ z\partial _z B(z)
 $\vspace{-.3cm} \\

 & &  \\

\textit{Deriving:}  & $\mathcal{B}^{\circ}$ & $ \partial _z B(z)
 $ \\

\end{tabular}
\caption{Some labelled constructions and their translations to exponential generating functions.}\label{table:1}
\end{table}

\paragraph{Graph theoretic preliminaries.}

We now mention some basic graph theoretic language and refer to~\cite{diestel2010graph} for a rigorous exposition.

A \emph{graph} $G(V,E)$ is defined by the vertex set $V$ and the edge set $E$ that is a set of 2-element subsets of $V$. If the elements of $E$ are ordered pairs of vertices, the graph is called \emph{directed}. A graph $G_1$ is a \emph{subgraph} of $G_2$ if it can be obtained by $G_2$ with edge and vertex deletions. In this work, a \emph{pattern} is the equivalence class of a graph, up to vertex relabelling. 

A graph is called \emph{2-connected} if at least two vertex deletions are needed in order to disconnect it.

A graph is an $m$-\emph{cycle}, denoted $C_m$, if $E=\{\{v_m,v_1\},\{v_1,v_2\},...,\{v_{m-1},v_m\}\}$ for $m> 2$ and some ordering $v_1,...,v_m$ of $V$. Let $C_m$ be a subgraph of $G$.  Any edge $\{v_i,v_j\}\in E_G,$ such that $\{v_i,v_j\}\subset V_{C_m}$ and $\{v_i,v_j\}\not\in E_{C_m}$ is called a \emph{chord} of $C_m$. If $V_{C_m}=V$, $C_m$ is called a \emph{Hamilton cycle} of $G$.

Suppose that $G$ admits a \emph{planar} embedding $\Gamma $ on the plane, i.e. an embedding such that the edges do not cross one another. The closures of the connected components of $\mathbb{R}^2\backslash \Gamma $ are called \emph{faces} of the embedding and there is always a unique face that is unbounded. Edges that lie on this face will be called \emph{outer}; otherwise, they will be called \emph{inner}.

\paragraph{Outerplanar Graphs.}
Let $P$ be a polygon with vertices numbered in $\{1,...,n\}$, in counterclockwise order. A \emph{polygon dissection} is an arrangement of diagonals on $P$, such that no two of them are intersecting.

\emph{Outerplanar graphs} are graphs that can be embedded on the plane in such a way that all vertices lie on the boundary of the unbounded face. Let $\mathcal{G}$ be the class of labelled outerplanar graphs. In \cite{bodirsky2007enumeration}, the authors bring together a set of combinatorial constructions that define the class $\mathcal{G}$ and involve the classes $\mathcal{D}, \mathcal{B}, \mathcal{C}$, corresponding to polygon dissections, labelled 2-connected, and labelled connected outerplanar graphs, respectively. These constructions translate to the functional equations in Table~\ref{table:2}. Note that the first one is an ordinary generating function and the rest are exponential.

\begin{table}[h!]\centering
\ra{1}
\begin{tabular}{@{ }ll@{ }}

Polygon dissections &  $D(z)=z/4+z^2/4-z/4\,\sqrt {{z}^{2}-6\,z+1} $\\ 

2-connected outerplanar   &   $B'(z)=\frac{1}{2z}D(z)+\frac{z}{2}$\\ 

Connected outerplanar    &  $zC'(z)=z\exp (B'(zC'(z)))$ \\ 

General outerplanar     &   $G(z)=\exp (C(z))$ \\

\end{tabular}
\caption{A defining set of equations for labelled outerplanar graphs}\label{table:2}
\end{table}
We are interested on how the first equation is derived. Each 2-connected outerplanar graph with $|V|>2$ has a Hamilton cycle, so, assuming a numbering on it, counting 2-connected outerplanar graphs of size $n$ is equivalent to counting dissections of the same size. The starting point is \cite{flajolet1999analytic}, where polygon dissections were modelled in a symbolic way, based on the recursive structure of the class, as shown in Figure~\ref{fig:1}. In short, one designates an edge of the polygon, say the $e=\{v_1,v_2\}$ edge, and then divides the dissections according to the size of the polygon where $e$ lies. The latter will be called \textit{root polygon} and $e$ will be called \emph{root}. On the rest of the edges, other dissections are attached.

\begin{figure}[h!]\centering
  \includegraphics[scale=.75]{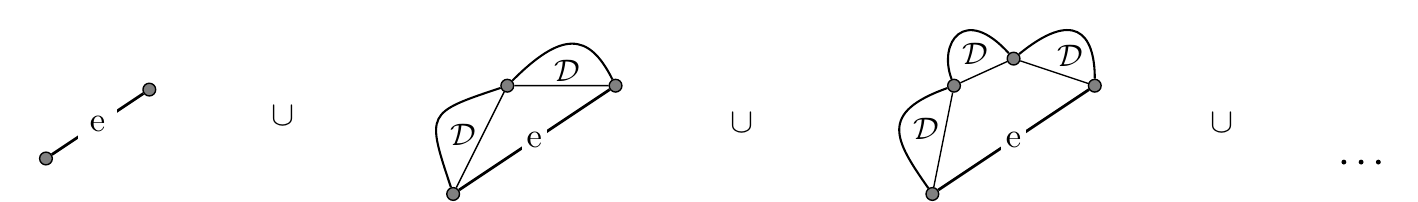}
  \caption{A combinatorial decomposition of fixed-polygon dissections.}\label{fig:1}
\end{figure}

\noindent The following translation is then implied, in terms of ogfs:
\begin{equation}D=z^2+\frac{D^2}{z}+\frac{D^3}{z^2}+\frac{D^4}{z^3}+...=z^2+\frac{D^2}{z-D}\Rightarrow 2D^2-D(z+z^2)+z^3=0.\label{eq:1}\end{equation}

The second equation in Table~\ref{table:2} is derived by observing that $n![z^n]B(z)=\frac{(n-1)!}{2}[z^n]D(z)$. The third and fourth correspond to the symbolic constructions: $\mathcal{Z}\star \mathcal{C}^{\circ}=\mathcal{Z}\star \mathrm{S\scalebox{.9}{ET}}(\mathcal{B}^{\circ}\circ \mathcal{C}^{\bullet})$ and $\mathcal{G}=\mathrm{S\scalebox{.9}{ET}}(\mathcal{C})$. The former relation is well-known (see for instance \cite[p.10]{harary9graphical},\cite{gimenez2009asymptotic}, \cite{subcritical}) and is based on the decomposition of a graph into 2-connected components. The latter one is straightforward.

\paragraph{Analytic Preliminaries.}
\noindent We denote by $\mathbf{y}=\mathbf{F}(z,\mathbf{y},\mathbf{u})$ a system of the form:
 \[
    \left\{
                \begin{array}{lll}
y_1&=& f_1(z,\mathbf{y},\mathbf{u})\\
\vdots & & \vdots \\
y_{m} &=& f_{m}(z,\mathbf{y},\mathbf{u}).
\end{array}
              \right. 
  \]
Let $f_1,...,f_m$ be analytic functions with non-negative coefficients, such that $\mathbf{F}(0,\mathbf{0},\mathbf{u}) \equiv \mathbf{0}$, $\mathbf{F}(z,\mathbf{0},\mathbf{u}) \not\equiv \mathbf{0}$ and there exists $j$ with $\mathbf{F}_{y_jy_j}\not\equiv \mathbf{0}$, where $\mathbf{F}_{y_jy_j}$ denotes the second derivative with respect to $y_j$. To any such system, we relate a directed graph with vertices $y_i$ and edges $(y_i,y_j)\in E$ whenever $F_i$ depends on $y_j$, i.e. whenever $\frac{\partial F_i}{\partial y_j}\not\equiv 0$. We call this the \emph{dependency graph} of the system and suppose that it is strongly connected, i.e. one can move from any vertex to any other through a path of directed edges. If such a system has unique analytic solutions with non-negative coefficients $y_i(z,u_1,...,u_{m})$ around $z=0,u_i=1$, it is called  \emph{well defined}. Then, Theorem~\ref{drmota} \cite[Prop.3]{drmota1997systems}\cite[Ch.2]{drmota2009random} holds, adjusted to our purpose: the only missing requirement is $\mathbf{F}(0,\mathbf{y},\mathbf{u})=0$, but the result is still valid when one deals with well-defined systems.

\begin{theorem}\label{drmota}
Let $\mathbf{y}=\mathbf{F}(z,\mathbf{y},\mathbf{u})$ be a well-defined system and let $\mathbf{y}=\mathbf{y}(z,\mathbf{u})=(y_1(z,\mathbf{u}),...,y_N(z,\mathbf{u}))$ denote the analytic solutions of the system. Suppose that the radius of convergence of $\mathbf{F}$ is large enough that there is a positive number $z_0$ of minimum modulus and real numbers $\mathbf{y}_0$ that satisfy the system \begin{gather}\label{charsystem}\begin{split}\mathbf{y} &= \mathbf{F}(z,\mathbf{y},\mathbf{1}) \\
0 &= \det (\mathbf{I}-\mathbf{F}_{\mathbf{y}}(z,\mathbf{y},\mathbf{1})).\end{split}\end{gather} Then there exist functions $\rho (\mathbf{u}), g_i(z,\mathbf{u}), h_i(z,\mathbf{u}),$ for $1\leq i\leq N,$ which are analytic around $z=z_0,\mathbf{u}=\mathbf{1},$ and satisfy $\rho (\mathbf{1})=z_0,$ $h_i(z_0,\mathbf{1})<0,$ such that:
\begin{equation} y_i(z,\mathbf{u})=g_i(z,\mathbf{u})-h_i(z,\mathbf{u})\sqrt{1-\frac{z}{\rho (u)}}\label{criticalexp}\end{equation}
 locally around $z=z_0$, $\mathbf{u}=\mathbf{1}$ with $\arg (z-\rho (\mathbf{u}))\neq 0.$  Assume also that $[z^n]y_j(z,\mathbf{1})>0$ for $1 \leq j \leq N$ and for all large enough $ n$. Then, for $\mathbf{u}$ sufficiently close to $\mathbf{1}$, the radius of convergence of all $y_i$ is $\rho (u)$ and there are no other singularities on the circle of convergence $|z|=|\rho (u)|$ than $z=\rho (u)$. Furthermore, there exists $\epsilon >0$, such that $y_i$ can be analytically continued to the region $|z| < |\rho (u)|+\epsilon $, $|\arg(z-\rho (u))|>\epsilon $. \label{eq:expansion}\end{theorem}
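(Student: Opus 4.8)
The plan is to deduce the statement from the Drmota--Lalley--Woods theorem on positive systems of functional equations, namely \cite[Prop.~3]{drmota1997systems} and \cite[Ch.~2]{drmota2009random}, and to check that the one hypothesis we have weakened --- replacing the requirement $\mathbf{F}(0,\mathbf{y},\mathbf{u})\equiv\mathbf{0}$ of the original formulation by the assumption that the system is well-defined --- does not affect the conclusion. First I would pin down where $\mathbf{F}(0,\mathbf{y},\mathbf{u})\equiv\mathbf{0}$ is actually used in the original proof: its sole purpose is to let one apply the multivariate implicit function / Weierstrass preparation argument at $z=0$ and conclude that $\mathbf{y}=\mathbf{F}(z,\mathbf{y},\mathbf{u})$ has a \emph{unique} solution $\mathbf{y}(z,\mathbf{u})$ analytic at $(0,\mathbf{1})$ with $\mathbf{y}(0,\mathbf{u})=\mathbf{0}$ and non-negative coefficients. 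A well-defined system already comes equipped with such a solution by hypothesis, so we simply take it as the starting point, and the remainder of the argument never sees the difference.

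Next I would reproduce, in outline, the core singularity-analysis argument. Fix $\mathbf{u}=\mathbf{1}$ and follow $\mathbf{y}(z,\mathbf{1})$ along the positive real axis from $z=0$. Since the $f_i$ have non-negative coefficients and the dependency graph is strongly connected, the non-negative Jacobian $\mathbf{F}_{\mathbf{y}}(z,\mathbf{y}(z,\mathbf{1}),\mathbf{1})$ is irreducible for $z>0$, so by Perron--Frobenius it has a simple dominant eigenvalue that is strictly increasing in $z$. A standard continuation argument then shows that either $\mathbf{y}(z,\mathbf{1})$ stays analytic (excluded by the hypotheses that the radius of convergence of $\mathbf{F}$ is large enough and that $[z^n]y_j(z,\mathbf{1})>0$ for all large $n$), or there is a smallest $z_0>0$ at which this Perron eigenvalue reaches $1$; equivalently $\det(\mathbf{I}-\mathbf{F}_{\mathbf{y}}(z_0,\mathbf{y}_0,\mathbf{1}))=0$, so the characteristic system \eqref{charsystem} has a real solution $(z_0,\mathbf{y}_0)$.

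I would then carry out the local analysis at $(z_0,\mathbf{y}_0)$. Using strong connectivity one substitutes all but one component (each of which stays analytic near $z_0$) to reduce the system to a single scalar branch equation $Y=\Phi(z,Y)$; the assumption that $\mathbf{F}_{y_jy_j}\not\equiv\mathbf{0}$ for some $j$ translates, together with irreducibility, into $\Phi_{YY}(z_0,Y_0)\neq 0$, so the singularity is a genuine fold. Weierstrass preparation then gives $Y-Y_0=-h(z)\sqrt{1-z/z_0}\,(1+o(1))$ with $h(z_0)<0$, hence the expansion \eqref{criticalexp} for every $y_i$. For the $\mathbf{u}$-dependence: all data are analytic in $\mathbf{u}$ near $\mathbf{1}$, and the solution $(z_0,\mathbf{y}_0)$ of \eqref{charsystem} is non-degenerate (the relevant Jacobian is invertible, again by strong connectivity and $\Phi_{YY}\neq0$), so the implicit function theorem produces analytic $\rho(\mathbf{u})$, $g_i(z,\mathbf{u})$, $h_i(z,\mathbf{u})$ with $\rho(\mathbf{1})=z_0$, $h_i(z_0,\mathbf{1})<0$, and \eqref{criticalexp} persists. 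The absence of other singularities on $|z|=|\rho(\mathbf{u})|$ comes from aperiodicity: for $\mathbf{u}=\mathbf{1}$ the condition $[z^n]y_j>0$ for all large $n$ is exactly aperiodicity of the class counted by $y_j$, which forces $\rho(\mathbf{1})$ to be the unique dominant singularity, and by continuity of the roots of the characteristic system this remains so for $\mathbf{u}$ near $\mathbf{1}$; the analytic continuation to a slightly larger indented disk is the routine consequence of the square-root local form.

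The main obstacle is the local step at $(z_0,\mathbf{y}_0)$: showing that the singularity is \emph{exactly} of square-root type. This is where strong connectivity (to reduce to a scalar branch equation via Perron--Frobenius), the non-vanishing of a second $\mathbf{y}$-derivative (to rule out a higher-order branch point), and Weierstrass preparation must be combined with care; everything else is either bookkeeping or a routine application of the implicit function theorem with analytic dependence on the parameters $\mathbf{u}$.
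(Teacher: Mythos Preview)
The paper does not prove this theorem at all: it is quoted as a known result from \cite[Prop.~3]{drmota1997systems} and \cite[Ch.~2]{drmota2009random}, and the only comment the paper makes is the one-line remark, just before the statement, that ``the only missing requirement is $\mathbf{F}(0,\mathbf{y},\mathbf{u})=0$, but the result is still valid when one deals with well-defined systems.'' Your proposal is therefore not competing with any proof in the paper; rather, you have supplied a sketch of the underlying Drmota--Lalley--Woods argument that the paper simply cites.

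Your key observation --- that the hypothesis $\mathbf{F}(0,\mathbf{y},\mathbf{u})\equiv\mathbf{0}$ is used only to guarantee existence and uniqueness of the analytic solution at the origin, and that this is exactly what the definition of a well-defined system hands you --- is precisely the justification the paper alludes to in that single sentence. Everything else in your outline (Perron--Frobenius on the irreducible Jacobian, reduction to a scalar branch equation, Weierstrass preparation giving the square-root singularity, analytic dependence on $\mathbf{u}$ via the implicit function theorem, aperiodicity forcing a unique dominant singularity) is a faithful summary of the standard proof in the cited references and is correct as an outline. So your approach is the same as the paper's in spirit, only more explicit.
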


\noindent Note that, according to Condition~\eqref{charsystem}, 1 is an eigenvalue of the matrix $\mathbf{F}_{\mathbf{y}}(z_0,\mathbf{y}_0,\mathbf{1})$. Systems like~\eqref{charsystem} will be called \emph{characteristic}. In expansions of the form~\eqref{criticalexp}, we will call \emph{critical exponent} the first non-integer power of the expansion (in this case, for instance, the critical exponent is equal to $1/2$).

For the asymptotic analysis, we follow the transfer principles of \emph{singularity analysis}, as they are presented in \cite{flajolet2009analytic}. Let $f(z)$ be an analytic function at zero with a unique smallest singularity at $z=\rho $ and $\rho >0$. We need the fact that, if $f(z)$ has a singular expansion $f(z)=a_0+a_1(1-z/ \rho )^{-\alpha }+\mathcal{O}\big( (1-z/ \rho )^{-\alpha +\delta })\big)$ in a domain $|z| \leq \rho+\epsilon $, $|z-\rho |\geq\epsilon $, where $\delta , \epsilon >0$ and $\alpha \in \mathbb{C}\backslash \mathbb{Z}_{\leq 0}$, then: \[[z^n]f(z)=a_1\frac{n^{\alpha -1}}{\Gamma (\alpha )}\rho ^{-n} \big(1+o(1)\big) ,\] where $\Gamma (\alpha )$ refers to the \emph{Euler Gamma function}, defined as $\Gamma (x)=\int _0^\infty t^{x-1}e^{-t}dt$.  

For the extraction of normal limit laws, we use Theorem 2.25 from~\cite{drmota2009random}.

\begin{theorem} Suppose that a sequence of $k$-dimensional random vectors $\mathbf{X}_n$ satisfies 
$\mathbb{E} [\mathbf{u}^{\mathbf{X}_n}]=\frac{c_n(\mathbf{u})}{c_n(\mathbf{1})},$ where $c_n(\mathbf{u})$ is the coefficient of $z^n$ of an analytic function $y(z,\mathbf{u})=\sum _{n\geq 0}c_n(\mathbf{u})z^n$ around $z=0,\mathbf{u}=\mathbf{1}$ and $c_n(\mathbf{u})>0$ for $n\geq n_0$ and positive real $\mathbf{u}$. Suppose also that $y(z,\mathbf{u})$ has a local singular representation of the form \[y(z,\mathbf{u})=g(z,\mathbf{u})+h(z,\mathbf{u})\Big( 1-\frac{z}{\rho (\mathbf{u})}\Big)^{\alpha} \] for some real $\alpha \in \mathbb{R}\backslash \mathbb{N}$ and functions $g(z,\mathbf{u}),h(z,\mathbf{u})\neq 0$ and $\rho (\mathbf{u})\neq 0$ that are analytic around $z=z_0>0$ and $\mathbf{ u}=\mathbf{1}$. If $z=\rho (\mathbf{u})$ is the only singularity of $y(z,\mathbf{u})$ on the disk $|z|\leq |\rho (\mathbf{u})| $, when $\mathbf{u}$ is sufficiently close to $\mathbf{1}$, and there exists an analytic continuation of $y(z,\mathbf{u})$ to the region $|z|< |\rho (\mathbf{u})| +\delta$, $|\arg (z-\rho (\mathbf{u}))|>\epsilon $ for some $\delta >0$ and $\epsilon >0$, 
then $\mathbf{X}_n$ satisfies a central limit theorem \[\frac{1}{\sqrt{n}}(\mathbf{X}_n-\mathbb{E}[\mathbf{X}_n])\xrightarrow[]{d} N(\mathbf{0},\mathbf{\Sigma })\] with 
\[\mathbb{E}[\mathbf{X}_n]=\boldsymbol{\mu }n+\mathcal{O}(1)\quad \mathrm{ and }\quad \mathbb{C}\textsl{ov}\hspace{.03cm}[\mathbf{X}_n]=\mathbf{\Sigma}n+\mathcal{O}(1),\] where \[\boldsymbol{\mu }=-\frac{\rho _{\mathbf{u}}(\mathbf{1})}{\rho (\mathbf{1})}\quad \mathrm{ and } \quad\boldsymbol{\Sigma } =-\frac{\rho _{\mathbf{uu}}(\mathbf{1})}{\rho (\mathbf{1})}+\boldsymbol{\mu}\boldsymbol{\mu}^T +\textsl{diag}( \boldsymbol{\mu } ).\]

 \label{quasi}
\end{theorem}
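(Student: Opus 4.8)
The plan is to derive, via singularity analysis, a uniform ``quasi-power'' expansion of the probability generating functions $p_n(\mathbf{u}):=\mathbb{E}[\mathbf{u}^{\mathbf{X}_n}]=c_n(\mathbf{u})/c_n(\mathbf{1})$ and then to feed it into the multivariate Quasi-Powers Theorem of Hwang (see \cite{flajolet2009analytic}, or \cite{drmota2009random}).

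First I would fix $\mathbf{u}$ in a small complex polydisc around $\mathbf{1}$. By hypothesis $y(z,\mathbf{u})=g(z,\mathbf{u})+h(z,\mathbf{u})\bigl(1-z/\rho(\mathbf{u})\bigr)^{\alpha}$ with $g,h,\rho$ analytic near $z=z_0$ and $\mathbf{u}=\mathbf{1}$, with $h(z_0,\mathbf{1})\neq 0$, $\rho(\mathbf{1})=z_0$, and with $z=\rho(\mathbf{u})$ the unique dominant singularity admitting an analytic continuation to a $\Delta$-domain $|z|<|\rho(\mathbf{u})|+\delta$, $|\arg(z-\rho(\mathbf{u}))|>\epsilon$. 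Since $\alpha\notin\mathbb{N}$, the summand $g(z,\mathbf{u})$ is analytic at $z=\rho(\mathbf{u})$ for $\mathbf{u}$ near $\mathbf{1}$ and contributes only an exponentially negligible amount to $[z^n]$; the transfer theorem for $\Delta$-analytic functions then gives
\[
c_n(\mathbf{u})=[z^n]y(z,\mathbf{u})=h\bigl(\rho(\mathbf{u}),\mathbf{u}\bigr)\,\frac{n^{-\alpha-1}}{\Gamma(-\alpha)}\,\rho(\mathbf{u})^{-n}\,\bigl(1+o(1)\bigr),
\]
where, because the $\Delta$-domain can be taken uniform over the polydisc and $\rho,h$ are continuous, the error term is uniform in $\mathbf{u}$. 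Dividing by the same expansion at $\mathbf{u}=\mathbf{1}$ — legitimate since $h(\rho(\mathbf{1}),\mathbf{1})\neq 0$ and $c_n(\mathbf{1})>0$ for $n\geq n_0$ — yields
\[
p_n(\mathbf{u})=\frac{h(\rho(\mathbf{u}),\mathbf{u})}{h(\rho(\mathbf{1}),\mathbf{1})}\Bigl(\frac{\rho(\mathbf{1})}{\rho(\mathbf{u})}\Bigr)^{n}\bigl(1+o(1)\bigr)
\]
uniformly. Writing $\mathbf{u}=e^{\mathbf{s}}$, this is exactly a uniform quasi-power expansion $\mathbb{E}[e^{\langle\mathbf{s},\mathbf{X}_n\rangle}]=e^{\,nU(\mathbf{s})+V(\mathbf{s})}\bigl(1+o(1)\bigr)$ with $U(\mathbf{s})=\log\rho(\mathbf{1})-\log\rho(e^{\mathbf{s}})$ and $V(\mathbf{s})=\log\bigl(h(\rho(e^{\mathbf{s}}),e^{\mathbf{s}})/h(\rho(\mathbf{1}),\mathbf{1})\bigr)$, both analytic around $\mathbf{s}=\mathbf{0}$; the positivity of $c_n$ at real positive $\mathbf{u}$ ensures the $\mathbf{X}_n$ are genuine random vectors, so $p_n$ is a true probability generating function.

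Next I would invoke the multivariate Quasi-Powers Theorem: the expansion above forces $\frac{1}{\sqrt n}\bigl(\mathbf{X}_n-\mathbb{E}[\mathbf{X}_n]\bigr)\xrightarrow{d}N(\mathbf{0},\mathbf{\Sigma})$ with $\mathbb{E}[\mathbf{X}_n]=n\,\nabla U(\mathbf{0})+\nabla V(\mathbf{0})+o(1)$ and $\mathbb{C}\textsl{ov}[\mathbf{X}_n]=n\,\mathrm{Hess}\,U(\mathbf{0})+\mathrm{Hess}\,V(\mathbf{0})+o(1)$; retaining the second-order term of the singular expansion sharpens these error terms to $\mathcal{O}(1)$. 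It then only remains to compute the derivatives of $U$ at $\mathbf{0}$ by the chain rule through $u_i=e^{s_i}$. A direct calculation gives $\partial_{s_i}U(\mathbf{0})=-\rho_{u_i}(\mathbf{1})/\rho(\mathbf{1})$, i.e.\ $\boldsymbol{\mu}=-\rho_{\mathbf{u}}(\mathbf{1})/\rho(\mathbf{1})$; differentiating once more, the off-diagonal entries of $\mathrm{Hess}\,U(\mathbf{0})$ equal $-\rho_{u_iu_j}(\mathbf{1})/\rho(\mathbf{1})+\mu_i\mu_j$, while the diagonal entries pick up the extra contribution from $\partial_{s_i}e^{s_i}$ and equal $-\rho_{u_iu_i}(\mathbf{1})/\rho(\mathbf{1})+\mu_i^2+\mu_i$; in matrix form this is precisely $\mathbf{\Sigma}=-\rho_{\mathbf{uu}}(\mathbf{1})/\rho(\mathbf{1})+\boldsymbol{\mu}\boldsymbol{\mu}^{T}+\mathrm{diag}(\boldsymbol{\mu})$.

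The step I expect to be the main obstacle is establishing the \emph{uniformity in $\mathbf{u}$} of the singularity-analysis estimate over a full complex neighbourhood of $\mathbf{1}$: one has to verify that $\rho(\mathbf{u})$ stays the strictly dominant singularity, that a common $\Delta$-domain can be used, and that $h(\rho(\mathbf{u}),\mathbf{u})$ stays bounded away from $0$ — this is exactly the content of the analytic-continuation hypothesis, but turning it into uniform bounds on the transfer-theorem error requires a (standard) compactness and continuity argument. Everything after that is routine chain-rule bookkeeping.
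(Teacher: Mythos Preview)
The paper does not prove this statement at all: it is quoted verbatim in the Preliminaries section as Theorem~2.25 from Drmota's book \cite{drmota2009random} and used as a black box. So there is no ``paper's own proof'' to compare against.

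That said, your sketch is essentially the standard argument behind the cited result: uniform singularity analysis over a neighbourhood of $\mathbf{u}=\mathbf{1}$ to obtain a quasi-power form for $c_n(\mathbf{u})/c_n(\mathbf{1})$, followed by the multivariate Quasi-Powers Theorem and the chain-rule computation of $\boldsymbol{\mu}$ and $\boldsymbol{\Sigma}$. This is exactly the route taken in \cite{drmota2009random} (and in \cite{flajolet2009analytic} for the univariate case), and your identification of the uniformity of the transfer estimate as the only genuinely delicate point is accurate.
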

Finally, a pair of combinatorial classes with generating functions $(y(z),g(z))$ is called \textit{subcritical} if $y(z)=g(y(z))$ and $y(\rho _y )<\rho _g $, where $\rho _y$ and $\rho _g $ are the radius of convergence of $y,g$, respectively.

\section{Encoding 2-connected patterns in  polygon dissections} \label{maain}
Let $\Delta =\{ \delta _1,\delta _2,...,\delta _m \} $ be a set of 2-connected patterns and let $D_{\Delta }(z,\mathbf{u})$ be a multivariate generating function, where the coefficient of $z^nu_1^{n_1}\cdots u_m^{n_m}$ is the number of polygon dissections in $\mathcal{D}$ that have $n$ vertices and $n_i$ subgraph occurrences of the pattern $\delta _i$.\footnote{From now on, we will refer to $\delta _i$ also as \emph{parameters}, in an abuse of terminology, since we are interested in their number in polygon dissections of size $n$.} In the construction of Figure~\ref{fig:1} and the corresponding Equation~\eqref{eq:1}, observe that the encoding of subgraphs of type $\delta _i$ is not straightforward, since they do not behave additively as parameters. The aim in this section is to prove, for any set $\Delta $, an explicit combinatorial construction for $\mathcal{D}$ that allows this encoding. The approach we follow is to partition the class $\mathcal{D}$ into smaller combinatorial classes and build a symbolic system with them and $\mathcal{D}$, in which we can handle uniformly the appearance of such patterns. The resulting system uses only the operations of addition and cartesian product, and thus settles formally the algebraic nature of $D_{\Delta }(z,\mathbf{u})$. 

For clarity of presentation, we first work with dissections $\bar{\mathcal{D}}$ that miss one of the two vertices of the root-edge, hence $[z^n]D(z,\mathbf{1})=[z^{n-1}]\bar{D}(z,\mathbf{1})$, in order to avoid the denominators of Equation~\eqref{eq:1}. We proceed by defining the auxiliary combinatorial classes $\bar{\mathcal{D}}_{\circ },\bar{\mathcal{D}}_{\nu _1},...,\bar{\mathcal{D}}_{\nu _k}$ that give us the required partition.

Since $\delta _i$ are subgraphs of polygon dissections and 2-connected, they are themselves isomorphic to polygon dissections. Let $H_{\Delta  }$ be the length of the maximum Hamilton cycle over all $\delta _i$. In order to encode the appearances of $\delta _i$ in $\bar{\mathcal{D}}$, we need to control the way the dissections are glued recursively, as suggested in Figure~\ref{fig:1}. In fact, we need to control the construction until the root polygon is of length at most $H_{\Delta  }$: no new copies of $\delta _i$ are created when already made dissections are glued around a big root polygon. Thus, we consider as \emph{small}, respectively \emph{big}, the polygons that are equal to or smaller than, respectively larger than, an $H_{\Delta  }$-gon and denote by $\bar{\mathcal{D}}_{\circ  }$ the class which contains all dissections in $\bar{\mathcal{D}}$ that have a big root polygon, plus the edge $e=\{v_1,v_2\}$. We are now able to give the following definition:


\begin{definition} A polygon dissection is called a \emph{composite root} with respect to a set of 2-connected patterns $\Delta =\{ \delta _1,\delta _2,...,\delta _m \} $ if the following two conditions hold:
\begin{enumerate} \item It consists only of small polygons.
\item Let $F$ be a face of the composite root that shares an edge with the unbounded face and let an edge $e_1\in F$ that is not an outer edge. Then, $e_1$ is connected to the root-edge with a simple path of adjacent polygons that constitutes a dissection of size less than $H_{\Delta }$.
\end{enumerate} \end{definition}


Observe that there is a finite number of composite roots. They are denoted  by $\nu _j$, where $j$ refers to some arbitrary ordering among them. Alternatively, we identify a composite root with a tuple of indices $i_1[i_2]$, where the first index $i_1$ is the size of its root polygon and the second index $i_2$ is its ordering among all the other composite roots with the with the same root polygon of size $i_1$, according to some arbitrary ordering among them (see, for instance, Figure~\ref{new444} or~\ref{fig:2}).

Let $A,B$ be polygon dissections. $B$ will be called an \emph{extension} of $A$ if $A$ is an induced subgraph of $B$ preserving the root-edge, i.e., one can obtain $A$ from $B$ by a sequence of vertex deletions, excluding the vertices of the root-edge, and renumbering the vertices according to their final position with respect to the root-edge. For instance, the dissections $3[3]$ and $3[8]$ in Figure~\ref{new444} are extensions of $3[1]$, but $3[9]$ is not an extension of $4[1]$. A composite root is called \emph{maximal} if there is no composite root that is an extension of it.


\begin{figure}[h!]\centering
  \includegraphics[scale=7]{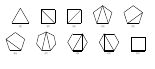}
  \vspace{-1cm}
  \caption{The composite roots, when $\Delta =\{\delta _1\}$ and $\delta _1$ is a 4-cycle. The roots $3[4]$, $3[7]$, $3[8]$, $3[9]$, and $4[1]$ are the only maximal ones and an edge is blue if it is outer in some maximal extension.}\label{new444}
\end{figure}

We associate to each one of the composite roots $\nu _j$ the combinatorial class $\bar{\mathcal{D}}_{\nu _j},$ which corresponds to polygon dissections that are extensions of the composite root $\nu _j$ and satisfy the following condition, called \emph{Condition (I)}: \begin{enumerate}
\item[(I)] If an outer edge of $\nu _j$ is inner in the maximal extensions of $\nu _j$, then only elements of $\bar{\mathcal{D}}_{\circ  }$ are attached on it.
\end{enumerate}

\begin{lemma}
The classes $ \bar{\mathcal{D}}_{\circ},\bar{\mathcal{D}}_{\nu _j}$ partition $\bar{\mathcal{D}}$. Moreover, each of the classes $\bar{ \mathcal{D}}, \bar{\mathcal{D}}_{\circ},\bar{\mathcal{D}}_{\nu _j}$ can be constructed in a non-trivial admissible way by the classes $\bar{ \mathcal{D}}, \bar{\mathcal{D}}_{\circ},\bar{\mathcal{D}}_{\nu _j}$.\end{lemma}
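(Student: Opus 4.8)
The plan is to exhibit, inside every $\alpha\in\bar{\mathcal{D}}$, a canonical composite root sitting near the root edge — call it the \emph{core} of $\alpha$ — and to prove two things: that $\bar{\mathcal{D}}_{\circ}$ is exactly the set of $\alpha$ with a big root polygon, and that $\bar{\mathcal{D}}_{\nu_j}$ is exactly the set of $\alpha$ whose core is $\nu_j$. Since "big root polygon'' and "core $=\nu_j$'' are mutually exclusive and jointly exhaustive (composite roots have small root polygons), this yields the partition, hence the admissible equality $\bar{\mathcal{D}}=\bar{\mathcal{D}}_{\circ}\cup\bigcup_j\bar{\mathcal{D}}_{\nu_j}$; the remaining two constructions then come from describing how a member of $\bar{\mathcal{D}}_{\circ}$, resp. $\bar{\mathcal{D}}_{\nu_j}$, is glued together from smaller dissections along the edges of its root polygon, resp. of its core.

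First I would use the fact that the polygons of a dissection form a tree rooted at the root polygon (polygons adjacent when they share an edge), so each polygon $R$ has a unique path of polygons joining it to the root polygon. I then define $\operatorname{core}(\alpha)$, when the root polygon is small, as the root polygon together with every polygon $R$ of $\alpha$ such that $R$ is small and the sub-dissection formed by the polygons strictly below $R$ on that path, root polygon included, has fewer than $H_{\Delta}$ vertices. I would check that this set is a subtree containing the root polygon, that the induced sub-dissection is a composite root (conditions 1 and 2 of the Definition hold because for an inner edge $e_1$ of a boundary face the witnessing path is a sub-path of the one certifying membership in the core), and that it is \emph{maximal} as a composite-root sub-dissection: any polygon of $\alpha$ adjacent to the core but not in it is either big or sits below a sub-dissection of at least $H_{\Delta}$ vertices, so adjoining it would break condition 1 or 2. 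In particular $\operatorname{core}(\alpha)$ is one of the finitely many $\nu_j$.

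Next I would prove $\alpha\in\bar{\mathcal{D}}_{\nu_j}\iff\operatorname{core}(\alpha)=\nu_j$. If $\alpha$ extends $\nu_j$, then $\nu_j$ is a composite-root sub-dissection of $\alpha$, so $\nu_j\subseteq\operatorname{core}(\alpha)$; if the inclusion were strict there is a core polygon $R\notin\nu_j$ whose parent $P$ lies in $\nu_j$, glued on an outer edge $f$ of $\nu_j$, and the depth bound satisfied by $R$ forces, on the one hand, that $\nu_j$ plus a polygon on $f$ is again a composite root (so $f$ is inner in every maximal extension of $\nu_j$) and, on the other, that $R$ is small — contradicting Condition (I), which allows only elements of $\bar{\mathcal{D}}_{\circ}$ on $f$. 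Conversely, if $\operatorname{core}(\alpha)=\nu_j$ then $\alpha$ extends $\nu_j$, and on any outer edge $f$ of $\nu_j$ that is inner in the maximal extensions the same depth computation shows that a small dissection glued on $f$ would enlarge the core, so whatever is glued there has a big root polygon and Condition (I) holds. I would also record the short lemma that, for a composite root, a non-root outer edge is inner in \emph{some} maximal extension iff it is inner in \emph{all} of them (both are equivalent to "a triangle can be adjoined on it while remaining a composite root''), which removes the ambiguity in the phrase "the maximal extensions''. Dissections with a big root polygon go straight into $\bar{\mathcal{D}}_{\circ}$, and the resulting classes are pairwise disjoint, so the partition follows.

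For the two remaining constructions: a member of $\bar{\mathcal{D}}_{\circ}$ is a root polygon of some size $\ell>H_{\Delta}$ carrying an arbitrary element of $\bar{\mathcal{D}}$ on each of its $\ell-1$ non-root edges; peeling off the edge next to the root edge shows $\bar{\mathcal{D}}_{\circ}$ is built from $\bar{\mathcal{D}}$ and $\bar{\mathcal{D}}_{\circ}$ by an identity of the form $\bar{\mathcal{D}}_{\circ}=\bar{\mathcal{D}}^{\,\times H_{\Delta}}\cup(\bar{\mathcal{D}}\times\bar{\mathcal{D}}_{\circ})$, i.e. using only disjoint union and Cartesian product. A member of $\bar{\mathcal{D}}_{\nu_j}$ is the fixed skeleton $\nu_j$ carrying, on each outer non-root edge that is inner in the maximal extensions, an element of $\bar{\mathcal{D}}_{\circ}$ (or nothing), and on each of its remaining ("blue'') outer edges an arbitrary element of $\bar{\mathcal{D}}$ — a finite Cartesian product of copies of $\bar{\mathcal{D}}$ and $\bar{\mathcal{D}}_{\circ}$, times the monomial recording the atoms of $\nu_j$; here one checks, again via the depth bound, that gluing arbitrary dissections on blue edges cannot reach back into the core. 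I expect the main obstacle to be the well-definedness and maximality of the core together with the bookkeeping in the equivalence above: precisely, verifying that "depth $<H_{\Delta}$'' is simultaneously the condition governing when the core can be enlarged and the condition making an outer edge of $\nu_j$ interior in maximal extensions, so that Condition (I) is neither too weak nor too strong.
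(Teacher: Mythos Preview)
Your approach is essentially the paper's: what you call the \emph{core} is what the paper refers to (somewhat tersely) as the ``maximal composite root'' of a dissection, and your equivalence $\alpha\in\bar{\mathcal{D}}_{\nu_j}\iff\operatorname{core}(\alpha)=\nu_j$ is exactly the content of the paper's disjointness-and-coverage argument; your auxiliary observation that ``inner in some maximal extension'' coincides with ``inner in all'' is a nice clarification that the paper leaves implicit. One small slip: by the paper's definition $\bar{\mathcal{D}}_{\circ}$ also contains the single edge, which you dropped both from your characterisation (``exactly the set of $\alpha$ with a big root polygon'') and from your recursion $\bar{\mathcal{D}}_{\circ}=\bar{\mathcal{D}}^{\times H_{\Delta}}\cup(\bar{\mathcal{D}}\times\bar{\mathcal{D}}_{\circ})$; without it the partition misses the size-one object and the generating-function identity loses its $+z$ term --- the paper writes $\bar{\mathcal{D}}_{\circ}=\{\text{edge}\}\cup\bigcup_{i>H_{\Delta}}\bar{\mathcal{D}}^{\times i}$.
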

\begin{proof}
Condition (I) forces the $\bar{\mathcal{D}}_{\nu _j}$ classes to be disjoint: if $p _i\in \bar{\mathcal{D}}_{\nu _i}, p _j\in \bar{\mathcal{D}}_{\nu _j}$, $i\neq j$, then $p _i\neq  p _j$, since their maximal composite roots differ in at least one small polygon. Moreover, any object in $\bar{\mathcal{D}}$ with small root polygon and maximal composite root $\nu _j$ belongs in $\bar{\mathcal{D}}_{\nu _j}$. Since the class $\bar{\mathcal{D}}_{\circ  }$ contains the edge graph and all dissections with big root polygon, the $\bar{\mathcal{D}}_{\circ},\bar{\mathcal{D}}_{\nu _j}$ classes indeed form a partition of $\bar{\mathcal{D}}$.  It then holds that

 \begin{equation}\bar{\mathcal{D}}=\bar{\mathcal{D}}_{\circ  }\text{ }\bigcup _{j=1 }^m\bar{\mathcal{D}}_{\nu _j } \quad\text{ and }\quad\bar{\mathcal{D}}_{\circ  }=\{ \includegraphics[scale=.6]{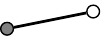}\}\bigcup _{i > H_{\Delta  }}\underbrace{\bar{\mathcal{D}}\times  \ldots
            \times\bar{\mathcal{D}}}_{i \text{ times}}.  \label{union}\end{equation}
            
            For $p\in \bar{\mathcal{D}}_{\nu _j}$, $p$ is decomposed uniquely into its maximal composite root $ \nu _j$ and a sequence of objects from the classes $\bar{\mathcal{D}}_{\circ},\bar{\mathcal{D}}_{\nu _j}$ that respects Condition (I). In particular, if an edge of $\nu _j$ is outer in its maximal extensions, then objects from any class are attached. Else, only members of $\bar{\mathcal{D}}_{\circ  }$ are attached. Let $t$ be the number of such outer edges in $\nu  _{j}$, $s$ be the number of all outer edges, and $\mathbf{c}\in 
\{\circ  ,\nu _1,\nu _2,...,\nu _m\} ^t $. Then it holds that\vspace{0cm} \begin{equation}\bar{\mathcal{D}}_{{\nu _j}}=\bigcup _{\mathbf{c}} \underbrace{\bar{\mathcal{D}}_{\circ  } \times  \ldots   \times\bar{\mathcal{D}}_{\circ  }}_{s-t-1 \text{ times}}\times \bar{\mathcal{D}}_{c_1}\times ...\times  \bar{\mathcal{D}}_{c _t}\label{unionplus}.\end{equation}
\end{proof}

\begin{figure}[h!]\centering
  \includegraphics[scale=7]{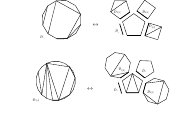}
  \vspace{-1cm}
  \caption{The decomposition of an element in  $\bar{\mathcal{D}}_{\circ }$ and an element in $\bar{\mathcal{D}}_{3[4]}$, when $\Delta =\{\delta _1\}$ and $\delta _1$ is a 4-cycle. See Figure~\ref{new444} for the class indices.}\label{dec1}
\end{figure}

\begin{theorem}
\label{main}
Let $\Delta =\{ \delta _1,\delta _2,...,\delta _m \} $ be a set of 2-connected patterns and $\nu _1 ,..., \nu _k$ the corresponding composite roots. The generating functions $\bar{D}(z,\mathbf{u}), \bar{D}_{\circ  }(z,\mathbf{u}),\bar{D}_{\nu_1}(z,\mathbf{u}),...,\bar{D}_{\nu _k}(z,\mathbf{u}),$ where $\mathbf{u}=(u_1,...,u_m),$ satisfy a computable system of the form:
 \[
    \left\{
                \begin{array}{lll}
y&=&r(z,u_1,...,u_m,y,y_{\circ },y_{\nu _1},...,y_{\nu _k}),\\
y_{\circ }&=&r_{ \circ}(z,u_1,...,u_m,y,y_{\circ  },y_{\nu _1},...,y_{\nu _k}),\\
y_{\nu _1} &=& r_{\nu _1}(z,u_1,...,u_m,y,y_{\circ  },y_{\nu _1},...,y_{\nu _k}),\\
  \vdots & & \vdots \\
y_{\nu _k} &=& r_{\nu _k}(z,u_1,...,u_m,y,y_{\circ  },y_{\nu _1},...,y_{\nu _k}).
\end{array}
              \right.
  \]
which is non-linear in $y,y_{\circ  },y_{\nu _j}$ and where each $r_j$ is a $\mathbb{Q}$-rational and analytic function around zero, with non-negative coefficients. Moreover, $r_{\circ  }(z,\mathbf{0})\neq 0$ and the system is strongly connected.

\end{theorem}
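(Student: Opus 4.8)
The plan is to build the system directly from the combinatorial decomposition of the previous lemma, and then to track two things along the way: (i) the generating-function translation of each admissible construction, and (ii) the bookkeeping of the pattern-marking variables $u_1,\dots,u_m$. For (i), the equations~\eqref{union} and~\eqref{unionplus} already express $\bar{\mathcal D}$, $\bar{\mathcal D}_{\circ}$, and each $\bar{\mathcal D}_{\nu_j}$ as disjoint unions of Cartesian products of the same classes, so the translation rules $\mathcal A=\mathcal B\cup\mathcal C\Rightarrow A=B+C$ and $\mathcal A=\mathcal B\times\mathcal C\Rightarrow A=BC$ from the preliminaries immediately give polynomial (hence $\mathbb Q$-rational) right-hand sides in $y,y_\circ,y_{\nu_1},\dots,y_{\nu_k}$. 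The one subtlety in~\eqref{union} is that $\bar{\mathcal D}_{\circ}$ is an infinite union over all $i>H_\Delta$; summing the geometric series $\sum_{i>H_\Delta} y^{i}$ in closed form produces a denominator of the shape $1-y$ (up to monomial factors), which is where rationality rather than polynomiality enters, exactly as in the passage from the first to the second form of Equation~\eqref{eq:1}.

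First I would fix, for each composite root $\nu_j$, the finite data attached to it: its root-polygon size, its total number $s$ of outer edges, and the subset of those $t$ outer edges that become inner in some maximal extension. Since there are finitely many composite roots and each is a dissection of size at most $H_\Delta$, all of this is a finite, explicitly computable table. Then $r$ is read off from the left half of~\eqref{union}, $r_\circ$ from the right half (with the geometric sum evaluated), and $r_{\nu_j}$ from~\eqref{unionplus}, where the union over $\mathbf c\in\{\circ,\nu_1,\dots,\nu_m\}^t$ becomes a product $\prod_{\ell=1}^{t}(y_\circ+y_{\nu_1}+\cdots+y_{\nu_m})$ times $y_\circ^{\,s-t-1}$ times a monomial in $z$ recording the vertices of $\nu_j$ not on the root edge. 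Non-negativity of all coefficients is clear since only $+$, $\times$, and one geometric sum of a series with non-negative coefficients are used; analyticity around $0$ holds because each $r_j$ is rational with a denominator that is a unit at the origin (it equals $1$ there); $r_\circ(z,\mathbf 0)\neq 0$ because $\bar{\mathcal D}_\circ$ contains the single edge, contributing a nonzero monomial that carries no $u_i$; and non-linearity in $y,y_\circ,y_{\nu_j}$ is visible from the quadratic-and-higher terms in~\eqref{union} and~\eqref{unionplus}. Strong connectivity of the dependency graph follows because $y$ depends on every $y_{\nu_j}$ and on $y_\circ$ (left half of~\eqref{union}), $y_\circ$ depends on $y$ (right half), and each $y_{\nu_j}$ depends on $y_\circ$ and, through the factor $(y_\circ+\sum_i y_{\nu_i})^t$ when $t\ge 1$, on the other $y_{\nu_i}$ as well — and when $t=0$ one still has $y_{\nu_j}$ depending on $y_\circ$ which reaches everything; so one can route from any node to any other via $y$ and $y_\circ$.

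The part requiring genuine care — and the main obstacle — is not the algebra but the claim that the $u_i$-marking is correct, i.e. that the exponent of $u_i$ in the term of $\bar{\mathcal D}_{\nu_j}$ contributed by a given decomposition really equals the number of subgraph copies of $\delta_i$ in the reassembled dissection. Subgraph-copy counts are not additive under the gluing operation in general, which is precisely why the composite-root machinery was introduced: the defining conditions of a composite root guarantee that every copy of every $\delta_i$ lies entirely inside the maximal composite root of the object (since any such copy is a $2$-connected dissection with Hamilton cycle of length $\le H_\Delta$, and Condition~(I) together with the path-length condition in the definition of composite root forbids a copy from straddling the boundary into an attached big piece). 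Hence each $\delta_i$-count becomes a constant determined by $\nu_j$ alone, so I would record it as a fixed monomial $\prod_i u_i^{c_{i,j}}$ multiplying $r_{\nu_j}$, where $c_{i,j}$ is the (computable) number of $\delta_i$-copies in $\nu_j$; the pieces attached on genuinely-outer edges contribute their own $\delta_i$-counts additively and recursively through the factors $y_\circ$ and $y_{\nu_i}$, and the pieces of $\bar{\mathcal D}_\circ$ attached on the edges that become inner contribute no new copies, which is why they may safely be the marked generating function $y_\circ$ with its own internal $u$-weights. Making this last accounting airtight — in particular arguing that no copy of any $\delta_i$ is double-counted and none is missed across the interface between $\nu_j$ and its attached pieces — is the crux, and once it is in place the rest of the statement follows by the routine translations just described.
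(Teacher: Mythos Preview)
Your overall plan --- translate the admissible decomposition of the previous lemma into generating-function equations and then read off the structural properties --- is the paper's approach, and your treatment of equations~\eqref{union} is fine. The gap is in the $u$-bookkeeping for~\eqref{unionplus}.

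You assert that every copy of $\delta_i$ in an object of $\bar{\mathcal D}_{\nu_j}$ lies either entirely in $\nu_j$ or entirely in one attached piece, so that the correction is a monomial $\prod_i u_i^{c_{i,j}}$ depending only on $\nu_j$ and the sum over $\mathbf c$ collapses to $y_\circ^{\,s-t-1}\bigl(y_\circ+\sum_\ell y_{\nu_\ell}\bigr)^t$. This is false. Your no-straddling argument only rules out copies reaching into an attached $\bar{\mathcal D}_\circ$-piece (big root polygon); on a \emph{free} edge you may attach an object from some $\bar{\mathcal D}_{\nu_\ell}$, whose root polygon is small, and then a copy of $\delta_i$ can use vertices of $\nu_j$ together with vertices of that small polygon beyond the shared edge. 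A concrete instance is the $4$-cycle case of Section~\ref{app}: take $\nu_j=3[2]$ (triangles $abc$ and $bcd$ with root $ab$) and attach on the free edge $cd$ an object whose root polygon is the triangle $cde$; then $b,c,e,d$ is a $4$-cycle that lies neither in $\nu_j$ (since $e\notin\nu_j$) nor in the attached piece (since $b$ is not one of its vertices). This is exactly why the paper's equation reads $\bar D_{3[2,3]}=u\,\bar D_\circ(\bar D_\circ+u\bar D_3+\bar D_{4[1]})^2$, with the inner $u$ appearing only when a triangle-rooted piece is attached.

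The fix, which is what the paper does, is to keep the sum over $\mathbf c$ unfactored and attach to each term a monomial $\prod_i u_i^{p_{\mathbf c}^{ji}}$ whose exponents depend on both $j$ and $\mathbf c$: the number of new copies created at the interface is determined by $\nu_j$ together with the labels $c_1,\dots,c_t$ of the attached pieces (not by anything deeper inside them), and is computable since all the data are finite. This still yields a polynomial right-hand side with non-negative coefficients, and your arguments for rationality of $r_\circ$, analyticity, non-linearity, $r_\circ(z,\mathbf 0)\neq 0$, and strong connectivity then go through unchanged. Two minor side remarks: the index set for $\mathbf c$ has $k$ composite-root labels, not $m$; and no explicit $z$-monomial is needed in $r_{\nu_j}$, since a polygon dissection has as many outer edges as vertices, so the $s-1$ recursive factors already account for every vertex of $\nu_j$ except the missing root vertex.
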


\begin{proof}  
\noindent The parameters $\delta _i$ are additive in the symbolic Equations~\eqref{union}, so their translation to multivariate generating functions depending on $z$ and $u$ is immediate: \begin{equation}\bar{D}=\bar{D}_{\circ  }+\sum _{i=1}^m \bar{D}_{\nu _i}, 
\quad\quad \bar{D}_{\circ  }=z+\sum _{i> H_{\Delta  }}\bar{D}^i\Rightarrow \bar{D}_{\circ  }=z+\frac{\bar{D}^{H_{\Delta }}}{1-\bar{D}}.\label{eq:1234}\end{equation} 

The parameters $\delta _i$ are not additive in the symbolic Equation~\eqref{unionplus}, since new copies of them might occur after the attachment of objects in $\bar{\mathcal{D}}_{{\nu _j}}$ around the composite root. However, any new copies occur locally, in the interactions between $\nu _j$ and subsets of $\mathbf{c}$. This is a fixed number $p_{\mathbf{c}}^{ji}$ for every $\mathbf{c}$ and $\delta _i$. Thus, Equation~\eqref{unionplus} is translated in the following way:
            \begin{equation} \bar{D}_{\nu _j}=\sum _{\mathbf{c} } \bar{D}_{\circ  }^{s-t-1}\bar{D}_{c _1}...\bar{D}_{c _t}u_1^{p_{\mathbf{c}}^{j1}} ...u_m^{p_{\mathbf{c}}^{jm}}.\end{equation}           
  
The emerging system is indeed strongly connected: all $\bar{D}_{\nu _j}$ are connected to $\bar{D}_{\circ }$, which connects to $\bar{D}$. The rest of the stated properties are immediate.
  
\end{proof}
 
By Equation~(\ref{eq:1234}), we obtain $\bar{D}_{\circ}=\bar{D}^{H_{\Delta }}+\bar{D}\bar{D}_{\circ}-z\bar{D}+z$. Also, one can substitute the $y_{\circ  },y_{\nu _j}$ variables in the right part of $y$'s equation with their equivalent expressions. Thus, systems of Theorem~\ref{main} can be turned to \textit{proper algebraic}, i.e., in the right part there is no constant term or linear term $y,y_i$. Then, we can argue that $\bar{D}(z,\mathbf{u})$ also satisfies some computable polynomial equation $p(z,\mathbf{y},\mathbf{u})=0$ (see \cite{Panholzer}). Consequently, also $D(z,\mathbf{u})$ is algebraic, as well as $D(z,\mathbf{0})$, i.e., the generating function of polygon dissections that avoid all patterns in $\Delta $ as subgraphs. 

\begin{corollary}
The generating function $D(z,\mathbf{u})$ is algebraic and the defining polynomial is computable. The generating function of polygon dissections that avoid all $\delta $-patterns as subgraphs, $D(z,\mathbf{0})$, is likewise algebraic.\label{cor}
\end{corollary}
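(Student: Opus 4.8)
The plan is to deduce Corollary~\ref{cor} directly from Theorem~\ref{main} together with the two algebraic manipulations that immediately precede the corollary in the text. First I would recall that Theorem~\ref{main} produces a finite system
\[
\mathbf{y} = \mathbf{r}(z,\mathbf{u},\mathbf{y}),\qquad \mathbf{y}=(y,y_{\circ},y_{\nu_1},\ldots,y_{\nu_k}),
\]
where every component $r_j$ is a $\mathbb{Q}$-rational function, analytic at the origin with non-negative coefficients, non-linear in the unknowns, strongly connected, and with $r_{\circ}(z,\mathbf{0})\neq 0$. The goal is to pass from such a system to a single polynomial relation $P(z,\bar D,\mathbf{u})=0$, and then transfer this to $D$ and to the specialization $\mathbf{u}=\mathbf{0}$.

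The key steps, in order, would be: (i) clear denominators in each equation $y_j = r_j(z,\mathbf{u},\mathbf{y})$ so that the system becomes polynomial in $z,\mathbf{u},\mathbf{y}$ — this is possible precisely because each $r_j$ is $\mathbb{Q}$-rational; (ii) eliminate the auxiliary variables $y_{\circ},y_{\nu_1},\ldots,y_{\nu_k}$ one at a time. Concretely, I would use the explicit relation already derived in the text, $\bar D_{\circ}=\bar D^{H_{\Delta}}+\bar D\,\bar D_{\circ}-z\bar D+z$, to solve for $y_{\circ}$ as a rational function of $z,\bar D$, substitute it into the remaining equations, and similarly back-substitute the $y_{\nu_j}$-equations (which express each $y_{\nu_j}$ as a polynomial in $z,\mathbf{u},y_{\circ}$ and the other $y_{\nu}$'s) into the equation for $y$; after clearing denominators this yields a polynomial $P(z,\bar D,\mathbf{u})=0$. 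The cleanest way to phrase this rigorously is to invoke elimination theory (resultants) or, as the text does, to cite that a proper algebraic system has an algebraic solution, e.g.\ \cite{Panholzer}; I would note that the system can be put into \emph{proper algebraic} form (no constant or linear term on the right) by the substitutions just described, so the cited result applies verbatim. (iii) Since $[z^n]D(z,\mathbf{u})$ and $[z^{n-1}]\bar D(z,\mathbf{u})$ agree up to the bookkeeping of the missing root-vertex, $D$ is obtained from $\bar D$ by an elementary rational/algebraic operation (multiplying by $z$ and adjusting the root-edge contribution as in Equation~\eqref{eq:1}), hence $D(z,\mathbf{u})$ is algebraic with a computable defining polynomial; equivalently one can observe $D$ and $\bar D$ satisfy the same field extension of $\mathbb{Q}(z,\mathbf{u})$. (iv) Finally, algebraicity is preserved under the substitution $\mathbf{u}=\mathbf{0}$: plugging $\mathbf{u}=\mathbf{0}$ into $P(z,\bar D,\mathbf{u})=0$ gives a nonzero polynomial $P(z,\bar D,\mathbf{0})=0$ annihilating $\bar D(z,\mathbf{0})$ — nonzero because $r_{\circ}(z,\mathbf{0})\neq 0$ guarantees the specialized system is still well-defined and non-degenerate — and the same transfer as in (iii) shows $D(z,\mathbf{0})$ is algebraic. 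Computability throughout follows because every step (denominator clearing, resultant computation, specialization) is effective.

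I expect the main obstacle to be step (ii): arguing cleanly that eliminating the auxiliary variables really does terminate in a single polynomial and that one can track which power of each $u_i$ appears, rather than silently invoking "elimination works". The subtlety is that the $y_{\nu_j}$-equations may be mutually recursive (a $y_{\nu_j}$ can depend on other $y_{\nu_i}$), so the back-substitution is not a simple triangular cascade; one must either check the dependency structure among the $\nu_j$ is acyclic enough, or simply treat the whole polynomial system and apply a Gröbner-basis / iterated-resultant elimination argument, which is where the reference to \cite{Panholzer} does the real work. A secondary, more minor point is verifying that the specialized polynomial $P(z,\bar D,\mathbf{0})$ is not identically zero in $\bar D$; this is where the hypothesis $r_{\circ}(z,\mathbf{0})\neq 0$ from Theorem~\ref{main} is used — it ensures the avoidance class $\mathcal{D}_{\Delta}$ is nonempty and its generating function is a genuine (non-trivial) algebraic branch.

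Overall the proof is short: it is essentially "Theorem~\ref{main} gives a proper algebraic system $\Rightarrow$ its solution is algebraic $\Rightarrow$ specialize and transfer", with all the combinatorial content already packed into Theorem~\ref{main}.
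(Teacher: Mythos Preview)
Your proposal is correct and follows essentially the same route as the paper: rewrite the rational system from Theorem~\ref{main} in proper algebraic form (using the polynomial rewriting of the $\bar D_{\circ}$-equation and substituting the $y_{\nu_j}$), invoke~\cite{Panholzer} to conclude $\bar D(z,\mathbf{u})$ is algebraic with computable annihilating polynomial, then transfer to $D$ and specialize at $\mathbf{u}=\mathbf{0}$. Your additional remarks on the mutual recursion among the $y_{\nu_j}$ and on why the specialization at $\mathbf{u}=\mathbf{0}$ does not degenerate are sound elaborations that the paper leaves implicit.
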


Note that the systems resulting from Theorem~\ref{main} are large with respect to $H_{\Delta  }$. In particular, any combination of at most $H_{\Delta  }-2$ small polygons around a root polygon of size $H_{\Delta  }-1$ will constitute a composite root. These are  $(H_{\Delta  }-1)^{H_{\Delta  }-2}$, since there are $H_{\Delta  }-2$ available edges and $H_{\Delta  }-1$ choices, when considering also the empty choice. However, when $H_{\Delta  }$ is small, one can find ad hoc arguments to make the systems manageable; see for instance Section~\ref{app}.



\begin{theorem}\label{random}
Let $\Delta =\{\delta _1,...,\delta _m\}$ be a set of 2-connected patterns. Let $\Omega _n$ be the set of polygon dissections of size $n$ and $X_n:\Omega _n \rightarrow \mathbb{Z}_{\geq 0}^m$ be a vector of random variables $X_{\delta _1},...,X_{\delta _m}$ in $\Omega _n$, such that $X_{\delta _i}(\omega )$ is the number of $\delta _i$ patterns in $\omega \in\Omega _n$. Then, $\mathbf{X}_n$ satisfies a central limit theorem \[\frac{1}{\sqrt{n}}(\mathbf{X}_n-\mathbb{E}[\mathbf{X}_n])\xrightarrow[]{d} N(\mathbf{0},\mathbf{\Sigma })\] with 
\[\mathbb{E}[\mathbf{X}_n]=\boldsymbol{\mu }n+\mathcal{O}(1) \text{ and } \mathbb{C}\textsl{ov}\hspace{.03cm}[\mathbf{X}_n]=\mathbf{\Sigma}n+\mathcal{O}(1),\] where $\boldsymbol{\mu }$ and $\mathbf{\Sigma}$ are computable. 
\end{theorem}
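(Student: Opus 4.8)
The plan is to run the system of Theorem~\ref{main} through the ``square-root singularity'' machine of Theorem~\ref{drmota} and then feed the resulting expansion into the quasi-powers theorem, Theorem~\ref{quasi}. First I would record that the system $\mathbf{y}=\mathbf{F}(z,\mathbf{y},\mathbf{u})$ with coordinates $\bar D,\bar D_{\circ},\bar D_{\nu_1},\dots,\bar D_{\nu_k}$ is \emph{well defined} in the required sense: it comes from the genuine combinatorial specification~\eqref{union}--\eqref{unionplus}, so it has unique formal power series solutions, and these have non-negative coefficients because they count polygon dissections. By Theorem~\ref{main} the right-hand sides $r_j$ are $\mathbb{Q}$-rational, analytic at the origin with non-negative coefficients, $\mathbf{F}(0,\mathbf{0},\mathbf{u})=\mathbf{0}$ (no object has size $0$), the dependency graph is strongly connected, the system is non-linear in the $y$-variables so some $\mathbf{F}_{y_jy_j}\not\equiv 0$, and $\mathbf{F}(z,\mathbf{0},\mathbf{u})\not\equiv\mathbf{0}$ since the equation for $y_{\circ}$ carries the monomial $z$ (see~\eqref{eq:1234}). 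Hence all the structural hypotheses preceding Theorem~\ref{drmota} are in place.

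Next I would verify the two analytic conditions of Theorem~\ref{drmota}, namely that there is a positive minimal-modulus solution $(z_0,\mathbf{y}_0)$ of the characteristic system~\eqref{charsystem} reached before any singularity of $\mathbf{F}$. At $\mathbf{u}=\mathbf{1}$ the coordinate $\bar D(z,\mathbf{1})$ equals $D(z)/z$ with $D$ as in Table~\ref{table:2}, so it has radius of convergence $z_0=3-2\sqrt{2}$, the smaller root of $z^2-6z+1$, and there $\bar D(z_0,\mathbf{1})=(2-\sqrt{2})/2<1$; therefore $\bar D_{\circ}=z+\bar D^{H_{\Delta}}/(1-\bar D)$ and all the $\bar D_{\nu_j}$, being polynomials in $\bar D$ over powers of $1-\bar D$ and of $z$, stay finite at $z_0$, so $\mathbf{F}$ is analytic there and the solution curve hits the branch point $\det(\mathbf{I}-\mathbf{F}_{\mathbf{y}})=0$ strictly before any pole of the $r_j$. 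The aperiodicity requirement $[z^n]y_j(z,\mathbf{1})>0$ for large $n$ holds because dissections of every size $\geq 2$ exist and one can always glue further small or big polygons on an outer edge, so each auxiliary class is eventually of full support; this also gives $[z^n]D_{\Delta}(z,\mathbf{1})>0$ and, by positivity of the specification, $[z^n]D_{\Delta}(z,\mathbf{u})>0$ for $n\geq n_0$ and real positive $\mathbf{u}$. Theorem~\ref{drmota} then yields, for $\mathbf{u}$ near $\mathbf{1}$, the expansion $y_i(z,\mathbf{u})=g_i(z,\mathbf{u})-h_i(z,\mathbf{u})\sqrt{1-z/\rho(\mathbf{u})}$ with $\rho,g_i,h_i$ analytic at $(z_0,\mathbf{1})$, $h_i(z_0,\mathbf{1})<0$, a unique dominant singularity at $z=\rho(\mathbf{u})$, and an analytic continuation to $|z|<|\rho(\mathbf{u})|+\epsilon$, $|\arg(z-\rho(\mathbf{u}))|>\epsilon$.

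Then I would transfer this to $D_{\Delta}(z,\mathbf{u})$, which differs from the coordinate $\bar D(z,\mathbf{u})$ only by the factor $z$ (a shift of the coefficient index, exactly as in the identity $[z^n]D(z,\mathbf{1})=[z^{n-1}]\bar D(z,\mathbf{1})$, which extends verbatim to the pattern-marking variables). Multiplication by the locally analytic, non-vanishing factor $z$ preserves the form $g+h(1-z/\rho)^{1/2}$ and the location $\rho(\mathbf{u})$ of the dominant singularity, with leading coefficient $-z_0h_{\bar D}(z_0,\mathbf{1})\neq 0$. Since $\mathbb{E}[\mathbf{u}^{\mathbf{X}_n}]=[z^n]D_{\Delta}(z,\mathbf{u})/[z^n]D_{\Delta}(z,\mathbf{1})$, all hypotheses of Theorem~\ref{quasi} hold with $\alpha=1/2\in\mathbb{R}\setminus\mathbb{N}$, and it delivers the stated central limit theorem with $\boldsymbol{\mu}=-\rho_{\mathbf{u}}(\mathbf{1})/\rho(\mathbf{1})$ and $\boldsymbol{\Sigma}=-\rho_{\mathbf{uu}}(\mathbf{1})/\rho(\mathbf{1})+\boldsymbol{\mu}\boldsymbol{\mu}^{T}+\mathrm{diag}(\boldsymbol{\mu})$. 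For computability: clearing denominators turns~\eqref{charsystem} into a polynomial system in $z,\mathbf{y},\mathbf{u}$ whose finitely many coefficients are produced explicitly by Theorem~\ref{main}; eliminating $\mathbf{y}$ (by resultants or a Gröbner basis) gives a polynomial annihilating $\rho(\mathbf{u})$, from which $\rho(\mathbf{1})$, $\rho_{\mathbf{u}}(\mathbf{1})$, $\rho_{\mathbf{uu}}(\mathbf{1})$, and hence $\boldsymbol{\mu}$ and $\boldsymbol{\Sigma}$, follow by implicit differentiation.

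The step I expect to be the main obstacle is the middle one: rigorously certifying that the dominant singularity of the system is the branch point of~\eqref{charsystem}, and not a pole of one of the rational maps $r_j$, uniformly for $\mathbf{u}$ in a neighbourhood of $\mathbf{1}$. This rests on having the $\mathbf{u}=\mathbf{1}$ solution in closed form (which we do, via $D(z)$), on the inequality $\bar D(z_0,\mathbf{1})<1$ that keeps the auxiliary coordinates finite at the singularity, and on a continuity-in-$\mathbf{u}$ argument; the positivity/aperiodicity bookkeeping for the auxiliary classes $\bar{\mathcal{D}}_{\circ},\bar{\mathcal{D}}_{\nu_j}$ is routine but must be spelled out. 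Everything downstream is a direct invocation of the quoted theorems.
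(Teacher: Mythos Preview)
Your proposal is correct and follows the same overall strategy as the paper: verify the structural and analytic hypotheses of Theorem~\ref{drmota} for the system of Theorem~\ref{main}, then feed the resulting square-root expansion into Theorem~\ref{quasi}.

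The one substantive difference is in how you certify that the characteristic system~\eqref{charsystem} has a minimal positive solution reached before any pole of the $r_j$. You do this concretely: since $\bar D(z,\mathbf 1)=D(z)/z$ is known in closed form, you compute $\bar D(z_0,\mathbf 1)=(2-\sqrt 2)/2<1$ at $z_0=3-2\sqrt 2$, so the denominator $1-\bar D$ in~\eqref{eq:1234} never vanishes along the solution curve and the singularity must come from $\det(\mathbf I-\mathbf F_{\mathbf y})=0$. The paper instead argues abstractly via Perron--Frobenius: strong connectedness makes $\mathbf r_{\mathbf y}$ irreducible, its dominant eigenvalue $\lambda$ is strictly increasing in the entries, hence $\lambda<1$ for $z<\rho$ (else Theorem~\ref{drmota} would force a smaller radius), and algebraicity of $\bar D$ forces $\bar D(\rho,\mathbf 1)$ finite, so $(\rho,\mathbf y(\rho,\mathbf 1))$ solves the characteristic system. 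Your route is more explicit and makes the ``no pole before the branch point'' step transparent; the paper's route is coordinate-free and would transfer unchanged to settings where no closed form for the $\mathbf u=\mathbf 1$ solution is available. Either argument suffices here.
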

\begin{proof}
Any system resulting from Theorem~\ref{main}, $\mathbf{y}-\mathbf{r}(\mathbf{y},\mathbf{z},\mathbf{u})=\mathbf{0}$, admits a non-negative power series solution $\mathbf{y}(z,\mathbf{u})$ around zero and $\mathbf{1}$ by construction. This is also unique by the implicit function theorem, since \[\det (\mathbf{I}-\mathbf{r}_{\mathbf{y}}(\mathbf{y},z,\mathbf{u}))| _{(\mathbf{y},z)=\mathbf{0},\mathbf{u}=\mathbf{1}}=1\] for every set $\Delta $. Thus, the defining system of $\bar{D}$ is always well defined. 

By construction, the system is also strongly connected. Consequently, the matrix $\mathbf{r}_{\mathbf{y}}$ is non-negative and irreducible in $\mathbb{R}^+$. It is known~\cite{minc1988nonnegative} that non-negative irreducible matrices have a unique dominant eigenvalue $\lambda $ that is positive and strictly increasing with respect to the entries of the matrix. Let $\rho $ be the radius of convergence of $\bar{D}(z,\mathbf{1})$. For $z< \rho $, it holds that $ \lambda (\mathbf{r}_{\mathbf{y}} (z, \mathbf{y}(z),\mathbf{1} ))<1$: if this was not the case, then $\bar{D}(z,\mathbf{1})$'s radius of convergence would be smaller, by Theorem~\ref{drmota}. The value $\bar{D} (\rho ,\mathbf{1})$ is finite, since $\bar{D}$ is algebraic. Consequently, the characteristic system always has the minimal solution $(\rho ,y(\rho ,\mathbf{1}))$. Moreover, it is true that $[z^n]\bar{D}(z,\mathbf{1})>0$.

The result can now be obtained as direct consequence of Theorems~\ref{main}, \ref{drmota}, and \ref{quasi}.

\end{proof}

\section{Applications }\label{app}

In this section, we give examples and applications of Theorem~\ref{main}. The applications concern the combinatorial classes of polygon dissections and outerplanar graphs and they are of two different kinds: computation of limit laws for 2-connected parameters $\delta _i$ and asymptotic enumeration of these classes, when the patterns $\delta _i$ are forbidden as subgraphs. For clarity, we give the defining equations for $\bar{D}$ and not for $D$, but the final computations will be done in terms of $D$. The equation analysis process is similar to the one in~\cite{bodirsky2007enumeration}.

\subsection{Extraction of limit laws}

\subsubsection*{Encoding $3$-cycles}

\noindent The only composite root is the triangle, denoted by $3[1]$. Thus, the defining system of $\bar{D}$ is the following:
\begin{eqnarray*}\bar{D} &=& \bar{D}_{\circ  }+\bar{D}_{3[1]},\\ \bar{D}_{\circ  } &=& z+\frac{\bar{D}^{3}}{1-\bar{D}},\\ \bar{D}_{3[1]} &=& u(\bar{D}_{\circ  }+\bar{D}_{3[1]})^2.\end{eqnarray*}
The latter is equivalent to the following polynomial system (notice that in this form it is not non-negative):
\begin{eqnarray*}\bar{D} &=& \bar{D}_{\circ  }+\bar{D}_{3[1]},\\ \bar{D}_{\circ  } &=& \bar{D}^3+\bar{D}\bar{D}_{\circ  }-z\bar{D}+z,\\ \bar{D}_{3[1]} &=& u(\bar{D}_{\circ  }+\bar{D}_{3[1]})^2.\end{eqnarray*}
\noindent By observing that $\bar{D}_{3[1]}=u\bar{D}^2$ and $\bar{D}\bar{D}_0=\bar{D}(\bar{D}-\bar{D}_{3[1]})=\bar{D}^2(1-u\bar{D})$, we obtain
\[\bar{D}=\bar{D}^3(1-u)+\bar{D}^2(1+u)-\bar{D}z+z.\]

\subsubsection*{Encoding $4$-cycles}

\noindent The composite roots are all the dissections in Figure~\ref{new444}. From now on, we write $\bar{D}_i$ for the sum $\sum _j \bar{D}_{i[j]}$. Also, when $m$ equations are the same and correspond to the same root polygon with $n$ sides, we write $ \bar{D}_{n[i_1,...,i_m]}$ or $ \bar{D}_{n[i_1-i_m]}$, for shortness.

\begin{minipage}{.4\textwidth }\begin{eqnarray*}\bar{D} &=& \bar{D}_{\circ  }+\bar{D}_{ 3}+\bar{D}_{ 4},\\ \bar{D}_{\circ  } &=& \bar{D}^4+\bar{D}\bar{D}_{\circ  }-z\bar{D}+z, \\ \bar{D}_{3[1]} &=& \bar{D}_{\circ  }^2 ,\\ \bar{D}_{3[2,3] } &=& u\bar{D}_{\circ  }(\bar{D}_{\circ  }+u\bar{D}_{3}+\bar{D}_{4[1]})^2 ,
\end{eqnarray*}\end{minipage}
\begin{minipage}{.6\textwidth }
\begin{eqnarray*}   \bar{D}_{3[4]} &=& u^2(\bar{D}_{\circ } +u \bar{D}_{3} +\bar{D}_{4[1]})^4 , \\ \bar{D}_{3[5,6]} &=& u\bar{D}_{\circ }\bar{D}^3 , \\ \bar{D}_{3[7]} &=& u^2\bar{D}^6  , \\ \bar{D}_{3[8,9]} &=& u^2\bar{D}^3(\bar{D}_{\circ }+u\bar{D}_3+\bar{D}_{4[1]})^2 , \\ \bar{D}_{4[1]} &=& u\bar{D}^3.
\end{eqnarray*}
\end{minipage} 

\vspace{.5cm}\noindent Notice that the term $(\bar{D}_{\circ } +u \bar{D}_{3} +\bar{D}_4)^2$ is equal to $\bar{D}_3$. So, the system is equivalent to:

\[\bar{D}=\bar{D}_{\circ  }+\bar{D}_3+\bar{D}_{4[1]} ,\quad \bar{D}_{\circ  }= \bar{D}^4+\bar{D}\bar{D}_{\circ  }-z\bar{D}+z ,\quad \bar{D}_3=(\bar{D}_{\circ  }+u\bar{D}_3+\bar{D}_{4[1]} )^2 \quad \bar{D}_{4[1]} =u\bar{D}^3.\]

\vspace{.5cm}
\noindent  We now use the previous systems, encoding $3$ and $4$-cycles, to obtain the following theorem:

\begin{theorem}\label{random2}
The number of appearances $X_n$ of 3-cycles and 4-cycles in polygon dissections and outerplanar graphs of size $n$ follows a central limit theorem as in \ref{1st}, where the mean and variance are asymptotically linear. The constants are the following, in their exact values for dissections and in approximation for outerplanar graphs:
\begin{table}[h!]\centering
\begin{tabular}{l|ll|ll}
Parameter  &  $\mu $ & $\sigma ^2 $ &  $\mu $ & $\sigma ^2 $\\
\midrule[.8pt]

3-cycles   &$\frac{1}{2}$ & ${\frac {-13+9\,\sqrt {2}}{-12+8\,\sqrt {2}}}\approx 0.39644$&  0.34793 &  0.40737  \\ 

4-cycles     & $ {\frac {-30+21\,\sqrt {2}}{-12+8\,\sqrt {2}}}

\approx 0.43933$ & $\,{\frac {-24216+17123\,\sqrt {2}}{-32 \left( -3+2\,\sqrt {2}
 \right) ^{2}}}
\approx 0.44710$ &  0.33705  &  0.36145\\ 

\end{tabular}
\end{table}
\end{theorem}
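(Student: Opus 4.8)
The plan is to apply the machinery already assembled in the paper to the two explicit systems derived just above for encoding $3$-cycles and $4$-cycles. By Theorem~\ref{main}, both systems are strongly connected, $\mathbb{Q}$-rational, analytic around zero with non-negative coefficients, and (after clearing denominators via the identity $\bar{D}_{\circ}=\bar{D}^{H_{\Delta}}+\bar{D}\bar{D}_{\circ}-z\bar{D}+z$) can be reduced to the single proper algebraic equations $\bar{D}=\bar{D}^3(1-u)+\bar{D}^2(1+u)-\bar{D}z+z$ and the corresponding quartic-degree system for $4$-cycles. The proof of Theorem~\ref{random} already establishes that such systems are well-defined, that the dominant eigenvalue stays below $1$ up to the radius of convergence, and that the characteristic system~\eqref{charsystem} has a minimal positive solution; hence Theorems~\ref{drmota} and~\ref{quasi} apply and give a central limit theorem with $\boldsymbol{\mu}=-\rho_{\mathbf{u}}(\mathbf{1})/\rho(\mathbf{1})$ and $\sigma^2=-\rho_{uu}(\mathbf{1})/\rho(\mathbf{1})+\mu^2+\mu$. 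So the real content of this theorem is the \emph{computation} of these constants, both for dissections and, via the transfer through Table~\ref{table:2}, for outerplanar graphs.

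For the dissection case I would proceed as follows. First translate from $\bar{D}$ back to $D$ using $[z^n]D(z,\mathbf{1})=[z^{n-1}]\bar{D}(z,\mathbf{1})$, so that $D(z,u)=z\bar{D}(z,u)$ as bivariate generating functions tracking the same parameter; this only rescales $z$ by a factor and does not change $\mu$ or $\sigma^2$ (one must check the bookkeeping carefully, since the size shifts by one — this is a place to be careful). Then, to find $\rho(u)$ one solves the characteristic system: the resultant of $P(z,y,u)$ and $\partial_y P(z,y,u)$ with respect to $y$ gives the discriminant locus, and $\rho(u)$ is the branch of that algebraic curve passing through $(z_0,1)$ with $z_0$ the known dissection singularity $3-2\sqrt{2}$ (from $z^2-6z+1=0$). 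Expanding $\rho(u)$ to second order at $u=1$ — e.g. by implicit differentiation of the discriminant equation, or by the standard ``smooth implicit-function'' computation $\rho'(1)$ and $\rho''(1)$ — yields $\mu$ and $\sigma^2$. For $3$-cycles this is light enough to do by hand and produces $\mu=1/2$ and $\sigma^2=\frac{-13+9\sqrt2}{-12+8\sqrt2}$; for $4$-cycles one runs the same elimination on the reduced system (three equations in $\bar{D},\bar{D}_{\circ},\bar{D}_3$ plus the explicit $\bar{D}_{4[1]}=u\bar{D}^3$), best handled with a computer algebra system, giving the tabulated radicals.

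For the outerplanar case I would chain the singular expansions through Table~\ref{table:2}. One marks the chosen pattern in $B'(z,u)=\frac{1}{2z}D(z,u)+\frac{z}{2}$ (the parameter is compatible and additive under the $2$-connected decomposition, so marking carries through), then composes with the substitution $zC'(z,u)=z\exp(B'(zC'(z,u),u))$, which is a subcritical composition schema in the sense defined at the end of Section~2, so the square-root singularity type is preserved and the new singularity $\rho(u)$ is determined by the branch-point equation for the inner function evaluated at the composition; finally $G(z,u)=\exp(C(z,u))$ does not move the singularity. Differentiating the resulting implicit relation for $\rho(u)$ twice at $u=1$ gives the outerplanar $\mu$ and $\sigma^2$; because the composition introduces the transcendental $\exp$, these constants are no longer nice radicals and one reports numerical approximations. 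The main obstacle throughout is purely computational bookkeeping: correctly identifying which branch of each discriminant/branch-point curve is the relevant one, keeping the $z \leftrightarrow \bar z$ and $D \leftrightarrow \bar D$ size shifts straight, and carrying the second-order expansions through the subcritical composition without sign errors — the analytic framework that guarantees the limit law itself is already in place from Theorems~\ref{random}, \ref{drmota}, and~\ref{quasi}.
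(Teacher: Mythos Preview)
Your proposal is correct and follows essentially the same route as the paper: the central limit theorem is inherited from Theorem~\ref{random} (via Theorems~\ref{drmota} and~\ref{quasi}), the dissection constants are extracted by implicitly differentiating the discriminant of the defining polynomial at $u=1$ around the known singularity $r(1)=3-2\sqrt{2}$, and the transfer to outerplanar graphs goes through the multivariate versions of the equations in Table~\ref{table:2} using subcriticality. The paper organises the outerplanar step slightly more explicitly by writing $\rho(u)=\Psi(\tau(u),u)$ with $\Psi(y,u)=y\exp(-B'(y,u))$ and obtaining $\tau'(1)$ from the relation $\tau(u)B''(\tau(u),u)=1$, but this is just a concrete realisation of the implicit differentiation you describe.
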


\begin{proof}
The central limit theorem is obtained from Theorem~\ref{random}. We present an outline of how to get the exact constants in both cases, which can be replicated for any system derived from Theorem~\ref{main}. For specific steps of the computations, see Section~\ref{comp}

In both cases, $D$ has a singular expansion of the form  \[g(z,u)-h(z,u)\sqrt{1-\frac{z}{r (u)}},\] that satisfies the requirements of Theorem~\ref{quasi}. This can be obtained by the same reasoning as in Theorem~\ref{random}.

The value $r(1)$ can be computed using the \textit{discriminant} of $D$'s defining polynomial $p(y,z,u)$ (see~\cite[Ch.VII]{flajolet2009analytic}), $\mathrm{disc}(z,u)$. In this case, $r(1)=3-2\sqrt{2}$, which is known from~\cite{flajolet1999analytic}. Then, we also find the values $r'(1), r''(1)$, by consecutively differentiating $\mathrm{disc}(r(u),u)$ with respect to $u$. With these values, we compute the constants required for the mean and variance according to Theorem~\ref{quasi} and obtain the indicated numbers. 

In order to pass to labelled 2-connected, connected, and then general outerplanar graphs, we use the multivariate analogues of the equations in Table~\ref{table:2}, i.e.
\[B'(z,u)=\frac{1}{2z}D(z,u)+\frac{z}{2}, \quad zC'(z,u)=z\exp (B'(zC'(z,u),u)), \quad G(z,u)=\exp (C(z,u)),\]
where the derivatives are taken with respect to $z$. Let $y$ denote $zC'(z,1)$ and consider the characteristic system:
\[y-z\exp (B'(y,1))=0\]
\[1-z\exp (B'(y,1))B''(y,1)=0\]
This has indeed a minimal positive solution $(\tau ,z_0)$, since the outerplanar graph class belongs to the subcritical family of graphs~\cite{subcritical}, i.e.  $z_0 C'(z_0,1)<r(1)$, where $z_0$ is the radius of convergence of $C'(z,1)$ and $r(1)$ the one of $B'(z,1)$.  Moreover, the system satisfies $1-yB''(y,1)=0$. Solving for $y$, we find the value $\tau $ and then $z_0=\tau \exp (-B'(\tau ,1))$. 

We can now apply Theorem \ref{drmota} and get a singular expansion around $z_0$ for $C'$, with critical exponent $1/2$. Moreover, the point $z_0$ is the only singularity on the radius of convergence of $C'$ and there exists an analytic function $\rho (u)$ around $u=1$ that gives the unique smallest singularity of $C'$ when $u$ is close to $1$; in particular, $\rho (1)=z_0$. Then, also $\tau (u)$ is an analytic function close to 1, where $\tau (u)=\rho (u)C'(\rho (u),u)$. 

 As in \cite{bodirsky2007enumeration}, if $\Psi (y,u)$ is an analytic function such that $\Psi (y,u)=y\exp (-B'(y,u))$, then $\rho (u)=\Psi (\tau (u),u)$ and it holds that \begin{equation}\rho '(u)=\frac{\partial \Psi }{\partial u}(\tau (u),u)\quad \text{ and }\quad \rho ''(u)=\frac{\partial ^2\Psi }{\partial y\partial u}(\tau (u),u)\tau '(u)+\frac{\partial ^2\Psi}{\partial u^2}(\tau (u),u).\label{coomp}\end{equation}
 

The functions $C$ and $G$ have the same singularity function $\rho (u)$ as $C'$, but the critical exponent of their expansion on $\rho (u)$ is $3/2$ (see the analysis in \cite{bodirsky2007enumeration} for details). We can thus apply Theorem \ref{quasi}, after computing the relevant constants. The value $\tau '(1)$ can be computed through the relation $\tau (u)B''(\tau (u),u)=1$. 

\end{proof}

In the case of outerplanar graphs, the limit laws are expected from \cite{subcritical}. However, in \cite{subcritical} there is no constructive way to compute the relevant constants. This is a contribution of this work, that offers specific defining equations for the function $B'$.

\subsubsection{Computations}\label{comp}
\noindent The following were performed in the computational software \texttt{Maple}.
\subsection*{Parameter: 3-cycles}
\noindent For dissections, the defining polynomial $p_3(D,z,u)$ is the following:

\[p_3=-u{D}^{3}+u{D}^{2}z-D{z}^{3}+{z}^{4}+{D}^{3}+{D}^{2}z-D{z}^{2}\]

\noindent Its discriminant with respect to $D$, $\mathrm{disc}(z,u)$, is equal to 
\[ -{z}^{6}  \left( 4\,{u}^{3}z+8\,{u}^{2}{z}^{2}+4\,u
{z}^{3}-8\,{u}^{2}z-44\,u{z}^{2}-4\,{z}^{3}-{u}^{2}+20\,uz+32\,{z}^{2}
+2\,u+8\,z-5 \right). \]
From this, we retrieve the root $r(1)=3-2\sqrt{2}$ . By setting $z=r(u)$ and differentiating with respect to $u$ in $\mathrm{disc}(r(u),u)$, we also retrieve $r'(1)=-\frac{3}{2}+\sqrt{2},$ $r''(1)=\frac{3\sqrt{2}}{4}-1$.

\noindent By differentiating $p_3$ with respect to $D$, we obtain exact expressions for the derivatives $\frac{\partial D(z,u)}{\partial u}$ and $\frac{\partial D(z,u)}{\partial z}$:
\[\frac{\partial D(z,u)}{\partial u}=-{\frac { \left( D \left( z,u \right)  \right) ^{2} \left( D \left( z,
u \right) -z \right) }{3\,u \left( D \left( z,u \right)  \right) ^{2}-
2\,D \left( z,u \right) uz+{z}^{3}-3\, \left( D \left( z,u \right) 
 \right) ^{2}-2\,D \left( z,u \right) z+{z}^{2}}}\]

\[\frac{\partial D(z,u)}{\partial z}={\frac {u \left( D \left( z,u \right)  \right) ^{2}-3\,D \left( z,u
 \right) {z}^{2}+4\,{z}^{3}+ \left( D \left( z,u \right)  \right) ^{2}
-2\,D \left( z,u \right) z}{3\,u \left( D \left( z,u \right)  \right) 
^{2}-2\,D \left( z,u \right) uz+{z}^{3}-3\, \left( D \left( z,u
 \right)  \right) ^{2}-2\,D \left( z,u \right) z+{z}^{2}}}\]
 \noindent Then, we write $1-zB''(z,1)=0$ in terms of $D$, using the previous expressions, i.e.
 \begin{equation}1+z \left( \,{\frac {D}{2{z}^{2}}}-\,{\frac {-3\,D{z}^{2}+4\,{z}^
{3}+2\,{D}^{2}-2\,Dz}{2z \left( {z}^{3}-4\,Dz+{z}^{2} \right) }}-\frac{1}{2}
 \right) =0\label{tau}\end{equation} and solve the system of Equation~(\ref{tau}) and $p_3(D,z,1)=0$. The values we obtain are $D \approx 0.04709517290,$ $ \tau  \approx 0.1707649868$. Then $\rho (1)=\tau {{\rm \exp}{(-\,{\frac {D \left( \tau,1 \right) }{2\tau }}-\frac{\tau}{2})}}\approx 0.1365937336$.

To compute the derivatives of $\Psi (\tau (u),u)$, we write them in terms of $D(\tau (u),u)$. The value $\tau '(1)$ can be found similarly, after writing the equation $\tau (u)B''(\tau (u),u)=1$ in terms of $D(\tau (u),u)$.  
In particular, we obtain $\tau '(1)\approx -0.849388502$, $\rho '(1)\approx -0.5564505691$ and $\rho ''(1)\approx 0.3078771691$. The final values are computed as indicated in Theorem~\ref{quasi}.

\subsection*{Parameter: 4-cycles}

The procedure of the computations is the same as in the previous case. We only note that the defining polynomial $p_4(D,z,u)$ is the following:
\begin{eqnarray*}p_4 &=& {u}^{4}{z}^{2}{D}^{6}-2\,{u}^{4}z{D}^{7}+{u}^{4}{D}^
{8}+2{u}^{3}{z}^{6}{D}^{3}-4{u}^{3}{z}^{5}{D}^{4}+2{
u}^{3}{z}^{4}{D}^{5}+{u}^{2}{z}^{10}-2\,{u}^{2}{z}^{9}D +\\ & & +{u
}^{2}{z}^{8}{D}^{2}-2\,{u}^{3}{z}^{4}{D}^{4}+4\,{u}^{3}{z}
^{3}{D}^{5}-4\,{u}^{3}{z}^{2}{D}^{6}+6\,{u}^{3}z{D}^
{7}-4\,{u}^{3}{D}^{8}-2\,{u}^{2}{z}^{8}D+\\ & & +4\,{u}^{2}{z}^{7}
{D}^{2} -6\,{u}^{2}{z}^{6}{D}^{3}+10\,{u}^{2}{z}^{5}{D_{{1}
}}^{4}-6\,{u}^{2}{z}^{4}{D}^{5}-2\,u{z}^{10}+4\,u{z}^{9}D-
2\,u{z}^{8}{D}^{2}+{u}^{2}{z}^{6}{D}^{2}- \\ & & -2\,{u}^{2}{z}^{5}
{D}^{3}+3\,{u}^{2}{z}^{4}{D}^{4}-6\,{u}^{2}{z}^{3}{D
}^{5}+5\,{u}^{2}{z}^{2}{D}^{6}-6\,{u}^{2}z{D}^{7}6\,{u}^{
2}{D}^{8}+2\,u{z}^{8}D-4\,u{z}^{7}{D}^{2}+4\,u{z}^{6
}{D}^{3}-\\ & &-8\,u{z}^{5}{D}^{4}+6\,u{z}^{4}{D}^{5}{z}^{
10}-2\,{z}^{9}D+{z}^{8}{D}^{2} +u{z}^{5}{D}^{3}-2\,u{
z}^{4}{D}^{4}+3\,u{z}^{3}{D}^{5}-2\,u{z}^{2}{D}^{6}+
2\,uz{D}^{7}-\\ & & -4\,u{D}^{8}+{z}^{9}-2\,{z}^{8}D+{z}^{7}
{D}^{2}+2\,{z}^{5}{D}^{4}-2\,{z}^{4}{D}^{5} -{z}^{7}D
_{{1}}+2\,{z}^{6}{D}^{2}-{z}^{5}{D}^{3}+{z}^{4}{D}^{
4}-{z}^{3}{D}^{5}+{D}^{8}
\end{eqnarray*}
and gives the intermediate values $r'(1)\approx -15/2+21\sqrt{2}{4}$, $r'(1)\approx -413/4+2337\sqrt{2}{32}$, $\tau \approx 0.1707649868$,  $D(\tau ,1) \approx 0.4709517290$, $\tau '(1)\approx -0.7427876522$.

\subsection{Restricted classes}\label{restrict}

Now we apply the results of Theorem~\ref{main} in the context of asymptotic enumeration. We give various examples in restricted classes of polygon dissections and outerplanar graphs.

\subsubsection*{Avoiding 3 and 4-cycles}

\noindent Using the equations of the previous section, we obtain immediately sets of equations for polygon dissections avoiding $3$ and $4$-cycles.  Setting $u=0$ and substituting for $\bar{D}$, we obtain
\begin{eqnarray*}\bar{D} =\bar{D}^3+\bar{D}\bar{D}_{\circ  }-z\bar{D}+z\end{eqnarray*} for $3$-cycles and 
 \begin{eqnarray*} \bar{D} &=& \bar{D}_{\circ }+\bar{D}_{3[1]}\\ \bar{D}_{\circ } &=& \bar{D}^4+\bar{D}\bar{D}_{\circ }-z\bar{D}+z, \\ \bar{D}_{3[1]} &=& \bar{D}_{\circ }^2 \end{eqnarray*} for $4$-cycles.

\subsubsection*{Avoiding 5-cycles}
\noindent In Figure~\ref{fig:2}, there are all the composite roots when $5$-cycles are avoided. We obtain the following system, after setting $u=0$ where appropriate.

\begin{minipage}{.5\textwidth }
\begin{eqnarray*} \bar{D} &=& \bar{D}_{\circ }+\bar{D}_{3}+\bar{D}_{4} \\  
\bar{D}_{\circ } &=&  \bar{D}^5+\bar{D}\bar{D}_{\circ }-z\bar{D}+z\\   
\bar{D}_{3[1]} &=& \bar{D}_{\circ }^2\\ 
\bar{D}_{3[2,3]} &=& \bar{D}_{\circ }^3\\ 
\end{eqnarray*} 
\end{minipage}
\begin{minipage}{.5\textwidth }
\begin{eqnarray*} 
\bar{D}_{4[1]} &=& \bar{D}_{\circ }^3   \\
\bar{D}_{4[2,3,4]} &=& \bar{D}_{\circ }^2(  \bar{D}_{\circ }+ \bar{D}_{4})^3   \\
\bar{D}_{4[5,6,7]} &=&   \bar{D}_{\circ } (\bar{D}_{\circ } +\bar{D}_{4})^6   \\
\bar{D}_{4[8]} &=&  (\bar{D}_{\circ } +\bar{D}_{4})^9    \\
\end{eqnarray*}
\end{minipage}

\noindent The system can be simplified, after observing that $\bar{D}_4=(\bar{D}_{\circ } +\bar{D}_{4})^3$ and $\bar{D}_3=\bar{D}_{\circ }^2(1+2\bar{D}_{\circ })$.

\begin{figure}[h!]\centering
  \includegraphics[scale=7]{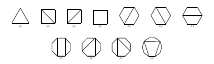}
  \caption{The composite roots when $5$-cycles are avoided. The dissection $4[8]$ is the only maximal one among them.}\label{fig:2}
\end{figure}

\subsubsection*{Avoiding 6-cycles}

\noindent In Figure~\ref{fig:4}, there are the composite roots when $6$-cycles are excluded, except for the ones including a $5$-gon. For $\bar{D}_5$, we observe immediately that $\bar{D}_5=(\bar{D}_{\circ }+\bar{D}_{4}+\bar{D}_{5})^4$.

\begin{minipage}{.5\textwidth }
\begin{eqnarray*} \bar{D} &=& \bar{D}_{\circ }+\bar{D}_{3}+\bar{D}_{4}+\bar{D}_{5} \\  
\bar{D}_{\circ } &=&  \bar{D}^6+\bar{D}\bar{D}_{\circ }-z\bar{D}+z\\   
\bar{D}_{3[1]} &=& \bar{D}_{\circ }^2\\ 
\bar{D}_{3[2,3]} &=& \bar{D}_{\circ }^3\\ 
\bar{D}_{3[4-8]} &=& \bar{D}_{\circ }^4\\ 
\bar{D}_{3[9,10]} &=& \bar{D}_{\circ }(\bar{D}_{\circ }+\bar{D}_5)^3\\ 
\bar{D}_{3[11]} &=& (\bar{D}_{\circ }+\bar{D}_5)^6\\ 
\end{eqnarray*} 
\end{minipage}
\begin{minipage}{.5\textwidth }
\begin{eqnarray*} 
\bar{D}_{4[1]} &=& (\bar{D}_{\circ }+\bar{D}_5)^3   \\[1.5pt]
\bar{D}_{4[2,3,4]} &=& \bar{D}_{\circ }^2(\bar{D}_{\circ }+\bar{D}_5)^2    \\[1.5pt]
\bar{D}_{4[5-10]} &=&  \bar{D}_{\circ }(\bar{D}_{\circ }+\bar{D}_5)^5   \\[1.5pt]
\bar{D}_{4[11,12,13]} &=&  (\bar{D}_{\circ } +\bar{D}_{5})^8    \\[1.5pt]
\bar{D}_{5} &=&  (\bar{D}_{\circ }+\bar{D}_{4}+\bar{D}_{5})^4   \\[1.5pt]
\end{eqnarray*}
\end{minipage}

\noindent We can immediately group all the $\bar{D}_{3[i]}$ and $\bar{D}_{4[j]}$ together to form equations for $\bar{D}_3$ and $\bar{D}_4$, respectively.

\subsubsection{Avoiding other patterns}

Now we avoid non-cyclic patterns. We analyse the ones induced by the dissections $3[2]$ and $4[2]$ in Figure~\ref{fig:4}, separately and together. We refer to them as \textit{Pattern I} and \textit{Pattern II}, respectively.

For Pattern I, the composite roots are the dissections $3[1,9,10,11] $ and $4[1]$ in Figure~\ref{fig:4} and the equations are the following: 

\begin{minipage}{.5\textwidth }
\begin{eqnarray*}\bar{D} &=& \bar{D}_{\circ }+\bar{D}_{3}+\bar{D}_{4[1]}\\ \bar{D}_{\circ } &=& \bar{D}^4+\bar{D}\bar{D}_{\circ }-z\bar{D}+z\\  \bar{D}_{4[1]} &=& (\bar{D}_{\circ }+\bar{D}_{3}+\bar{D}_{4[1]})^3\\ \end{eqnarray*} 
\end{minipage}
\begin{minipage}{.5\textwidth }
\begin{eqnarray*} \bar{D}_{3} &=& \bar{D}_{\circ }^2\\
\bar{D}_{3[9,10]} &=&( \bar{D}_{\circ }+ \bar{D}_{3}+ \bar{D}_{4[1]})^3 \bar{D}_{\circ }\\
\bar{D}_{3[11]} &=& ( \bar{D}_{\circ }+ \bar{D}_{3}+ \bar{D}_{4[1]})^6\\
 \end{eqnarray*}
\end{minipage}

 For Pattern II, the composite roots are the ones in Figure~\ref{fig:2} and the ones containing some $5$-gon. The equations are the same as in the $5$-cycle case, with the following differences: Now $\bar{D}= \bar{D}_{\circ }+\bar{D}_{3}+\bar{D}_{4}+\bar{D}_5$. Also, in all the equations apart from the one of $\bar{D}$ and $\bar{D}_{\circ }$, we substitute $\bar{D}_{\circ }$ for $\bar{D}_{\circ }+\bar{D}_5$. The equation $\bar{D}_5=\bar{D}^4$ must be added as well.

\begin{figure}[h!]\centering
  \includegraphics[scale=7]{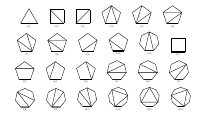}

  \caption{The composite roots when $6$-cycles are excluded, except for the ones including a $5$-gon.}\label{fig:4}
\end{figure}

For Patterns I and II, the composite roots are the dissections $3[1],4[1-8]$ in Figure~\ref{fig:2} and the ones containing some $5$-gon. The equations are the same as when avoiding Pattern II, only now $\bar{D}_{3[2]}$ and $\bar{D}_{3[3]}$ are omitted.

\begin{theorem}\label{enumm}
Let $\mathcal{D}, \mathcal{G}$ be the classes of dissections and outerplanar graphs avoiding a set of 2-connected patterns $\Delta =\{\delta _1 ,...,\delta _m\}$, respectively. Then, $\mathcal{D}$ has asymptotic growth of the form \[\alpha _n\sim \frac{\alpha}{\Gamma (-\frac{1}{2})}\cdot n^{-3/2} \cdot r ^{-n} \] and $\mathcal{G}$ has asymptotic growth of the form \[g_n\sim \frac{g}{\Gamma (-\frac{3}{2})} \cdot n^{-5/2} \cdot\rho ^{-n}\cdot n!\] where 
both $\alpha ,g$ are computable constants. In Table~\ref{table:3}, there are approximations of $\alpha , g$ for various choices of $\Delta $. \end{theorem}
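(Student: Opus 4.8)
The plan is to obtain the two asymptotic expansions by a singularity analysis of the generating functions $D(z,\mathbf{0})$ and $G(z,\mathbf{0})$, exactly as in the limit-law analysis but with the marking variables set to zero. First I would note that, by Theorem~\ref{main} (with $\mathbf{u}=\mathbf{0}$), the functions $\bar D(z,\mathbf 0), \bar D_{\circ}(z,\mathbf 0), \bar D_{\nu_j}(z,\mathbf 0)$ satisfy a polynomial system that is well defined (the Jacobian $\mathbf I-\mathbf r_{\mathbf y}$ has determinant $1$ at the origin, as in the proof of Theorem~\ref{random}) and strongly connected. Hence Theorem~\ref{drmota} applies: the system has a characteristic solution $(r,\mathbf y_0)$ of minimal modulus, and each component — in particular $\bar D(z,\mathbf 0)$, and therefore $D(z,\mathbf 0)$ — has a square-root singular expansion $g(z)-h(z)\sqrt{1-z/r}$ at its dominant singularity $z=r$, with $r$ computable from the discriminant of the defining polynomial $p(z,y,\mathbf 0)$. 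Singularity analysis (the transfer theorem quoted in the Preliminaries, with $\alpha=1/2$) then yields $\alpha_n=[z^n]D(z,\mathbf 0)\sim \frac{\alpha}{\Gamma(-1/2)}n^{-3/2}r^{-n}$, where $\alpha=-h(r)\sqrt r$ (up to the standard normalization of the transfer), which is computable from the system; this gives the first claim.

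For the outerplanar statement I would transfer through the three equations of Table~\ref{table:2} specialized to avoid $\Delta$, namely $B'(z,\mathbf 0)=\frac{1}{2z}D(z,\mathbf 0)+\frac z2$, $zC'(z,\mathbf 0)=z\exp(B'(zC'(z,\mathbf 0),\mathbf 0))$, and $G(z,\mathbf 0)=\exp(C(z,\mathbf 0))$. Writing $y=zC'(z,\mathbf 0)$, the functional equation $y=z\exp(B'(y,\mathbf 0))$ is a smooth inverse-function schema; as in~\cite{bodirsky2007enumeration}, provided the relevant pair is subcritical (so that the singularity of $C'$ comes from the branch point of the inverse relation and not from the singularity $r$ of $B'$), one solves the characteristic system $y=z\exp(B'(y,\mathbf 0))$, $1=yB''(y,\mathbf 0)$ for $(\tau,\rho)$, obtains a square-root expansion for $C'$ at $z=\rho$, and then integration raises the critical exponent to $3/2$ for $C$ and for $G=\exp(C)$. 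Singularity analysis with $\alpha=3/2$ then gives $[z^n]G(z,\mathbf 0)\sim \frac{g}{\Gamma(-3/2)}n^{-5/2}\rho^{-n}$, i.e. $g_n=n!\,[z^n]G(z,\mathbf 0)\sim \frac{g}{\Gamma(-3/2)}n^{-5/2}\rho^{-n}n!$, with $g$ computable from the expansion coefficients of $C$ at $\rho$; this is the second claim. Finally I would carry out these computations for the explicit specialized systems written above (avoiding $3$-, $4$-, $5$-, $6$-cycles, and Patterns I, II and their combination) to fill in Table~\ref{table:3}.

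The main obstacle is verifying the hypotheses that make the two transfer theorems applicable for \emph{every} admissible $\Delta$, rather than just in the worked examples. Concretely: (i) one must check that the minimal-modulus solution of the characteristic system~\eqref{charsystem} for the $\mathbf u=\mathbf 0$ system genuinely exists and that $z=r$ is the unique singularity on $|z|=r$ with the required analytic continuation — this follows from strong connectedness plus Perron--Frobenius (the dominant eigenvalue of $\mathbf r_{\mathbf y}$ crosses $1$ exactly at the singularity), mirroring the argument already given in the proof of Theorem~\ref{random}; and (ii) one must ensure the restricted outerplanar pair $(zC'(z,\mathbf 0),\ z\exp(B'(z,\mathbf 0)))$ is still subcritical, i.e. $\tau=\rho\,C'(\rho,\mathbf 0)<r$. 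Subcriticality for the \emph{unrestricted} class is known~\cite{subcritical}; for the restricted class it should follow because deleting objects from $\bar{\mathcal D}$ only shrinks the coefficients of $B'$, which moves its singularity $r$ outward while the branch point $\tau$ moves inward, but this monotonicity needs to be stated carefully (and checked in the degenerate cases where the restricted class is finite or trivial, which must be excluded or handled separately). Once these structural points are in place, the constants $\alpha$ and $g$ are extracted by the same discriminant-and-differentiation bookkeeping used in Section~\ref{comp}.
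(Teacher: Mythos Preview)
Your proposal is essentially the same approach as the paper's: apply Theorem~\ref{drmota} to the $\mathbf u=\mathbf 0$ system (via the same Perron--Frobenius reasoning as in Theorem~\ref{random}) to get a square-root expansion for $D$, then transfer through the equations of Table~\ref{table:2} and the analysis of~\cite{bodirsky2007enumeration} to get critical exponent $3/2$ for $G$, and finish with singularity analysis. The paper's proof is in fact terser than yours and does not separately address the subcriticality of the \emph{restricted} pair that you flag as obstacle (ii); it simply invokes the same reasoning as in Theorem~\ref{random} and the analysis of~\cite{bodirsky2007enumeration}, and for the constant $g$ quotes the explicit formula $g=\tau(\log\rho-\log\tau+1)+B(\tau)$ from that reference, approximating $B(\tau)$ numerically from truncated series.
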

\begin{proof}
We apply Theorem~\ref{drmota} with the same reasoning as in Theorem~\ref{random} and, in the end, obtain singular expansions with singular exponents $1/2$ and $3/2$ for $\mathcal{D}$ and $\mathcal{G}$, respectively. Then, the types of asymptotic growth can be obtained from the transfer principles of singularity analysis. 

It is true that $g=\tau (\log \rho -\log (\tau )+1)+B(\tau )$ \cite{bodirsky2007enumeration}. The value $B(\tau )$ can be approximated from the systems in this work. For details on the computations, see Section~\ref{computations}.
\end{proof}

\begin{table}[h!]\centering\ra{1.5}
\begin{tabular}{l|lll|lll}
Restriction & $r $ & $r^{-1}$ & $\alpha $ &  $\rho $ & $\rho ^{-1}$ & $g$ \\
\midrule[.8pt]

3-cycles    &  0.29336 & 3.40869  & 0.02330 &  0.20836 & 4.79916 & 0.01578\\ 

4-cycles    &  0.26488 & 3.77515 & 0.02177 &  0.18919  & 5.28562 & 0.01462 \\ 

5-cycles     &  0.25383 & 3.93949 & 0.02217 & 0.18045 & 5.54143 & 0.01514 \\ 

6-cycles      &  0.24835 & 4.02657 & 0.02321 & 0.17510  & 5.71082 & 0.01630   \\                                                                                          \hline
pattern I     &  0.20867 & 4.79214  & 0.01592 & 0.15895  & 6.29100  &   0.01050      \\                                                                                              
pattern II     &  0.22416 &  4.46098 & 0.01856  & 0.16608  &  6.02092 & 0.01195    \\                                                                                              
pattern I\&II   &  0.24332 &  4.10977 & 0.01987  & 0.17751  &  5.63345 & 0.01351    \\                                                                                              

\end{tabular}
\caption{The constants for the asymptotic growth of restricted polygon dissections and outerplanar graphs, respectively.}\label{table:3}
\end{table}

\subsubsection{Computations}
\label{computations}

We will use the notation from the proof of Theorem~\ref{random}.

\noindent  In the level of dissections, there is no computational difficulty, since in all cases the constants can be computed either through the defining equation of $\bar{D}$, or using the characteristic system. In all cases, the main singularity can be found by solving directly the characteristic system, while $\alpha $ can be found by substituting the $\bar{D}_i$ variables in the system by their singular expansions and solve the system with respect to the undetermined coefficients.

Moving to the connected and general level, some values are harder to compute. In particular, the computation of $B(\tau )$ is not easily accessible through our implicit function setting, while also $\tau $ is hard to compute when the value $H_{\Delta  }$ grows and the defining equation for $d$ becomes either too big or too hard to compute. For instance, the defining polynomial for the 5-cycle case has degree 45 with respect to $D$ and 54 with respect to $z$, while the polynomial for the 6-cycle case was not retrieved in a reasonable amount of time, i.e. in half hour. For our purposes, we computed an approximation for both values $\tau ,B (\tau ) $, using the first 700 terms of the power series expansion of $B$. The expansion was extracted from the one of $\bar{D}$, which was found by iterating the defining system in \texttt{Maple}. The results are displayed in Table~\ref{table:3}.

\section{Acknowledgements}
\noindent This research was funded under an FPI grant from the MINECO research project MTM2015-67304-PI. The author was also partially funded by the Barcelona Graduate School of Mathematics, funded by Maria de Maetzu research grant MDM-2014-0445. The author is grateful to Prof. Juanjo Rué for posing the problem and for making valuable comments on the draft. The author is also grateful to Prof. Dimitrios M. Thilikos who made this research possible. The anonymous referees are warmly thanked for their remarks that improved significantly the initial version of this work.

\section{Appendix}

\subsection{Counting series}

\noindent Here are the first terms of the counting sequences of all the restricted dissection classes that appear in Section~\ref{restrict}. The count for triangle-free polygon dissections appears also in \cite{smiley} and is the sequence \texttt{A046736} in~\cite{Sloane_theencyclopedia}. The counts for $3,4,5,6$-cycle free dissections appear also in the author's {master's thesis}~\cite{velonamsc}, where they are derived using ad hoc arguments.

\begin{table}[h]\label{count}
\centering\centering\ra{1.2}
\begin{tabular}{l|lllllll}

 n& 3-cycles & 4-cycles & 5-cycles & 6-cycles & pattern I  & pattern II & patterns I \& II  \\
 \midrule[.4pt]
2  &1 & 1 & 1 & 1 & 1 & 1 & 1\\ 
3 &0 & 1 & 1 & 1 & 1 & 1 & 1 \\ 
4 &1 & 0 & 3 & 3 & 1 & 3 & 1\\ 
5 &1 & 1 & 0 & 11 & 6 & 1 & 1\\ 
6 &4 & 7 & 4 & 0 & 19 & 10 & 10\\ 
7 &8 & 22 & 8 & 15 & 64 & 43 & 29\\ 
8 &25 & 49 & 65 & 37 & 251 & 181 & 101 \\ 
9 &64 & 130 & 229 & 85 & 979 & 643 & 283\\ 
10 &191 & 468 & 946 & 651 & 3888 & 2233 & 1023\\
11&540&1651&2850&2498&15896&8152&3576
\\ 12&1616&5240&9367&10556&65871&31523&13143
\\ 13&4785&16485&28068&46112&276225&125776&46502
\\ 14&14512&55184&97408&167100&1171838&502449&169221
\\ 15&44084&190724&339694&621677&5016697&2001773&
618807\\ 16&135545&652359&1276467&2215039&21644451&
8002279&2301983\\ 17&418609&2213044&4659990&7524303&
94033342&32271594&8576756\\ 18&1302340&7584939&
17107629&26414280&410990601&131355333&32169753\\ 19&
4070124&26346522&61200635&92579458&1805881012&538125069&121134235
\\ 20&12785859&91951596&220323189&332018450&
7972740040&2213876868&458881370\\

\bottomrule[.4pt]

\end{tabular}
\caption{The first terms of the restricted dissection classes of Section~\ref{app}.}
\end{table}

\end{document}